\definecolor{gray}{RGB}{200,200,200}
\definecolor{red}{RGB}{200,10,10}
\newtheorem{thm}{Theorem}
\newtheorem{cor}[thm]{Corollary}
\newtheorem{prop}[thm]{Proposition}
\newtheorem{remark}[thm]{Remark}
\newcommand{\ignore}[1]{}
\newcommand{\mR}{{\mathbb R}}
\newcommand{\mD}{{\mathbb D}}
\newcommand{\mZ}{{\mathbb Z}}
\newcommand{\mC}{{\mathbb C}}
\newcommand{\cE}{{\mathcal E}}
\newcommand{\cM}{{\mathcal M}}
\newcommand{\cF}{{\mathcal F}}
\newcommand{\cD}{{\mathcal D}}
\newcommand{\cH}{{\mathcal H}}
\newcommand{\largedlbrack} {{\text{\Large{\textlbrackdbl}}}}
\newcommand{\largedrbrack}{{\text{\Large{\textrbrackdbl}}}}
\newcommand{\dlbrack} {{\text{{\textlbrackdbl}}}}
\newcommand{\drbrack}{{\text{{\textrbrackdbl}}}}
\newcommand{\bu}{{\bold u}}
\newcommand{\bg}{{\bold p}}
\newcommand{\bh}{{\bold h}}
\newcommand{\g}{{\operatorname{g}}}
\newcommand{\dist}{\operatorname{d}}
\newcommand{\D}{\operatorname{D}}
\newcommand{\intpi}{{\int_{-\pi}^\pi}}
\newcommand{\dtheta}{{\frac{d\theta}{2\pi}}}
\newcommand{\dmu}{{\frac{d\mu(\theta)}{2\pi}}}
\newcommand{\nhalf}{{-\frac{1}{2}}}
\newcommand{\jj}{{\rm j}}
\newcommand{\AR}{{\rm AR}}
\newcommand{\linear}{{\rm linear}}
\newcommand{\geodesic}{{\rm geodesic}}
\newcommand{\trace}{\operatorname{tr}}
\newcommand{\argmin}{\operatorname{argmin}}
\def\spacingset#1{\def\baselinestretch{#1}\small\normalsize}
\begin{document}
\title{Distances and Riemannian metrics\\ for multivariate spectral densities}
\author{Xianhua Jiang, Lipeng Ning, and Tryphon T. Georgiou,~\IEEEmembership{Fellow, IEEE}\thanks{Supported by NSF, AFOSR, and the Vincentine Hermes-Luh Endowment.
The authors are with the Department of Electrical \& Computer Engineering, University of Minnesota, Minneapolis, MN 55455.
{\{jiang082, ningx015, tryphon\}@umn.edu}}}
\date{\today}

\spacingset{1}
\markboth{\today}{\today}
\maketitle
\begin{abstract}
We first introduce a class of divergence measures between power spectral density matrices. These are derived by comparing the suitability of different models in the context of optimal prediction. 
Distances between ``infinitesimally close'' power spectra are quadratic, and hence, they induce a differential-geometric structure. We study the corresponding Riemannian metrics and, for a particular case, provide explicit formulae for the corresponding geodesics and geodesic distances.
The close connection between the geometry of power spectra and the geometry of the Fisher-Rao metric is noted.
\end{abstract}

\section{Introduction}
Distance measures between statistical models and between signals constitute some of the basic tools of Signal Processing, System Identification, and Control \cite{Basseville_distance,Gray_distortion}. Indeed, quantifying dissimilarities is the essence of detection, tracking, pattern recognition, model validation, signal classification, etc.
Naturally, a variety of choices are readily available for comparing deterministic signals and systems. These include various $L_p$ and Sobolev norms on signal spaces, and induced norms in spaces of systems. Statistical models on the other hand are not elements of a linear space. Their geometry is dictated by positivity constraints and hence, they lie on suitable cones or simplices. This is the case for covariances, histograms, probability distributions, or power spectra, as these need to be positive in a suitable sense. A classical theory for statistical models, having roots in the work of C.R. Rao and R.A. Fisher, is now known as ``information geometry'' \cite{Rao_information,Amari_methods,Cencov_statistical,Kass_geometrical}. The present work aims at a geometric theory suitable for time-series modeled by power spectra.
To this end, we follow a largely parallel route to that of information geometry (see  \cite{Georgiou_distance}) in that a metric is now dictated by the dissimilarity of models in the context of prediction theory for second-order stochastic processes.
The present work builds on \cite{Georgiou_distance}, which focused on scalar time-series, and is devoted to power spectral densities of multivariable stochastic processes.

The need to compare two power spectra densities $f_1,f_2$ directly has led to a number of divergence measures which have been suggested at various times \cite{Basseville_distance,Gray_distortion}. Key among those are the Itakura-Saito distance
\[
\D_\text{IS}(f_1,f_2):=\intpi \left( \frac{f_1(\theta)}{f_2(\theta)}-\log \frac{f_1(\theta)}{f_2(\theta)} -1\right)\dtheta
\]
and the logarithmic spectral deviation
\[
\D_{\log}(f_1,f_2):=\sqrt{\intpi \left|\log \frac{f_1(\theta)}{f_2(\theta)}\right|^2\dtheta},
\]
see e.g., \cite[page 370]{Gray_distortion}. The distance measures developed in \cite{Georgiou_distance} are closely related to both of these, and the development herein provides a multivariable counterpart. Indeed, the divergences that we list between matrix-valued power spectra are similar to the Itakura-Saito divergence and geodesics on the corresponding Riemannian manifolds of power spectra take the form of logarithmic integrals.

Distances between multivariable power spectra have only recently received any attention. In this direction we mention generalizations of the Hellinger and Itakura-Saito distances by Ferrante {\em et al.} \cite{Ferrante_time,Ferrante_hellinger} and the use of the Umegaki-von Neumann relative entropy \cite{Georgiou_relative}.
%
The goal of this paper is to generalize the geometric framework in \cite{Georgiou_distance} to the matrix-valued power spectra. We compare two power spectra in the context of linear prediction: a choice 
between the two is used to design an optimal filter which is then applied to a process corresponding to the second power spectrum. The ``flatness'' of the innovations process, as well as the degradation of the prediction error variance, when compared to the best possible, are used to quantify the mismatch between the two. This rationale provides us with natural divergence measures. We then identify corresponding Riemannian metrics that dictate the underlying geometry. For a certain case we compute closed-form expressions for the induced geodesics and geodesic distances.  These provide a multivariable counterpart to the logarithmic intervals in \cite{Georgiou_distance} and the logarithmic spectral deviation \cite[page 370]{Gray_distortion}.
It is noted that the geodesic distance has certain natural desirable properties; it is inverse-invariant and congruence-invariant. Moreover, the manifold of the multivariate spectral density functions endowed with this geodesic distance is a complete metric space. A discrete counter part of certain of these Riemannian metrics, on the manifold of positive definite matrices (equivalent to power spectra which are constant across frequencies), has been studied extensively in connection to the geometry of positive operators \cite{Bhatia_positive} and relates to the Rao-Fisher geometry on probability models restricted to the case of Gaussian random vectors.

Indeed, there is a deep connection between the Itakura-Saito distance and the Kullback-Leibler divergence between the corresponding probability models  \cite[page 371]{Gray_distortion}, \cite{Pinsker_information} which provides a link to information geometry. Hence, the Riemannian geometry on power spectral densities in \cite{Georgiou_distance} as well as the multivariable structure presented herein is expected to have a strong connection also to the Fisher-Rao metric and the geometry of information.  An interesting study in this direction which taps on an interpretation of the geometry of power spectra via the underlying probability structure and its connection to the Kullback-Leibler divergence is given in Yu and Mehta  \cite{Yu_kullback}. However, a transparent differential geometric explanation which highlights points of contact is still to be developed. Further key developments which parallel the framework reported herein and are focused on moment problems are presented in \cite{Ferrante_time,Ferrante_hellinger}.

The paper is organized as follows. In Section \ref{sec:prelim} we establish notation and overview the theory of the multivariate quadratic optimal prediction problem. In Section \ref{sec:III} we introduce alternative distance measures between multivariable power spectra which reflect mismatch in the context of one-step-ahead prediction.  In Section \ref{sec:Rie} we discuss  Riemannian metrics that are induced by the divergence measures of the previous section.  In Section \ref{sec:geometryonmatrices} we discuss the geometry of positive matrices. In Section \ref{sec:distances} the geometric structure is analyzed and geodesics are identified. In Section \ref{sec:example} we provide examples to highlight the nature of geodesics between power spectra and how these may compare to alternatives.

\section{Preliminaries on Multivariate Prediction}\label{sec:prelim}

Consider a multivariate discrete-time, zero mean, weakly stationary stochastic process $\{\bu(k),~ k\in\mZ\}$ with $\bu(k)$ taking values in $\mC^{m\times 1}$. Throughout, boldface denotes random variables/vectors, $\cE$ denotes expectation, $\jj=\sqrt{-1}$ the imaginary unit, and $^*$ the complex conjugate transpose.
Let
\[
R_k=\cE\left\{\bu(\ell) \bu^*(\ell-k)  \right\} ~\text{for } l,k\in\mZ
\]
denote the sequence of matrix covariances and $d\mu(\theta)$ be the corresponding matricial power spectral measure for which
\[
R_k=\intpi  e^{-\jj k\theta}\dmu.
\]
For the most part, we will be concerned with the case of non-deterministic processes with an absolutely continuous power spectrum. Hence, unless we specifically indicate otherwise,
$d\mu(\theta)=f(\theta)d\theta$ with  $f(\theta)$ being a matrix-valued power spectral density (PSD) function. Further, for a non-deterministic process $\log(f(\theta))$ needs to be integrable, and this will be assumed throughout as well.

Our interest is in comparing PSD's and in studying possible metrics between such. The evident goal is to provide a means to quantify deviations and uncertainty in the spectral domain in a way that is consistent with particular applications.
More specifically, we present metrizations of the space of PSD's which are dictated by optimal prediction and reflect dissimilarities that have an impact on the quality of prediction.

\subsection{Geometry of multivariable processes}

We will be considering least-variance linear prediction problems. To this end, we define $L_{2,\bu}$ to be the closure of $m\times 1$-vector-valued finite linear combinations of
$\{\bu(k)\}$ with respect to covergence in the mean \cite[pg. 135]{Wiener_prediction}:
\[
L_{2,\bu}:=\overline{\left\{\sum_{\rm finite} P_k\bu(-k) \;:\; P_k\in\mC^{m\times m},\; k\in\mZ  \right\}}.
\]
Here, ``bar'' denotes closure.
The indices in $P_k$ and $\bu(-k)$ run in opposite directions so as to simplify the notation later on where prediction is based on past observations.
This space is endowed with both, a matricial inner product
\begin{displaymath}
\begin{split}
\largedlbrack \sum_k P_k\bu(-k), \sum_k Q_k \bu(-k) \largedrbrack :=\phantom{xxxxxxxxxxxxxxxxxx}\\
 \cE \left\{ \left( \sum_k P_k \bu(-k) \right) \left( \sum_k Q_k\bu(-k)\right)^* \right\},
\end{split}
\end{displaymath}
as well as a scalar inner product
\begin{displaymath}
\begin{split}
\langle \sum_k P_k\bu(-k), \sum_k Q_k \bu(-k) \rangle:=\phantom{xxxxxxxxxxxxxxxxxx}\\
\trace \largedlbrack \sum_k P_k\bu(-k), \sum_k Q_k \bu(-k) \largedrbrack.
\end{split}
\end{displaymath}
Throughout, ``$\trace$'' denotes the trace of a matrix.
It is standard to establish the correspondence between
\begin{align*}
\bg:=p(\bu)&:=\sum_k P_k \bu(-k) ~\text{and }\\
p(z)&:=\sum_k P_kz^k
\end{align*}
with $z=e^{\jj\theta}$ for $\theta\in[-\pi, \pi]$.
This is the Kolmogorov isomorphism between the ``temporal'' space $L_2(\bu)$ and ``spectral'' space $L_{2,d\mu}$,
\[
\varphi \;:\; L_2(\bu)\to L_{2,d\mu} \;:\; \sum_k P_k \bu(-k)\mapsto \sum_k P_kz^k.
\]
It is convenient to endow the latter space $L_{2,d\mu}$ with
the matricial inner product
\[
\dlbrack p, q\drbrack_{d\mu}:=\intpi \left(p(e^{\jj \theta})\dmu q(e^{\jj \theta})^*\right)
\]
as well as the scalar inner product
\[
\langle p, q\rangle_{d\mu}:=\trace \dlbrack p, q\drbrack_{d\mu}.\phantom{xxxxxxxx}
\]
The additional structure due to the matricial inner product is often referred to as {\em Hilbertian} (as opposed to {\em Hilbert})  \cite{Masani_RecentTrends}.

Throughout, $p(e^{\jj \theta})=\sum_k P_ke^{\jj k\theta}$, $q(e^{\jj \theta})=\sum_k Q_k e^{\jj k\theta}$, where we use lower case $p,q$ for matrix functions and upper case $P_k,Q_k$ for their matrix coefficients.
For non-deterministic processes with absolutely continuous spectral measure $d\mu(\theta)=f(\theta)d\theta$, we simplify the notation into
\begin{eqnarray*}
\dlbrack p, q\drbrack_{f}&:= & \dlbrack p, q\drbrack_{fd\theta}, \mbox{ and}\phantom{xxxx}\\
\langle p, q\rangle_{f} &:=& \langle p, q\rangle_{fd\theta}.
\end{eqnarray*}

Least-variance linear prediction
\begin{equation}\label{eq:predict}
    \min\left\{ \trace\cE\{\bg \bg^* \} : \bg=\bu(0)-\sum_{k>0}P_k\bu({-k}),\;P_k\in\mC^{m\times m} \right\}
\end{equation}
can be expressed equivalently in the spectral domain
\begin{equation}\label{eq:predictSpect}
\min \left\{ \dlbrack p,p\drbrack_f : p(z)=I-\sum_{k>0}P_kz^k,\;P_k\in\mC^{m\times m}  \right\}
\end{equation}
where the minimum is sought in the positive-definite sense, see  \cite[pg. 354]{Masani_RecentTrends}, \cite[pg. 143]{Wiener_prediction}. We use ``$I$'' to denote the identity matrix of suitable size.
It holds that, although non-negative definiteness defines only a partial order on the cone of non-negative definite Hermitian matrices, a minimizer for \eqref{eq:predict} always exists. Of course this corresponds to a minimizer for \eqref{eq:predictSpect}.
The existence of a minimizer is due to the fact that $\trace\cE\{\bg \bg^* \}$ is matrix-convex. Here $d\mu=fd\theta$ is an absolutely continuous measure and the
quadratic form is not degenerate; see \cite[Proposition 1]{Georgiou_CPD} for a detailed analysis and a treatment of the singular case where $\mu$ is a discrete matrix-valued measure.
Further, the minimizer of \eqref{eq:predict} coincides with the minimizer of
\begin{equation}\label{eq:predictSpect2}
\min \left\{ \langle p,p\rangle_f : p(z)=I-\sum_{k>0}P_kz^k,\;P_k\in\mC^{m\times m}  \right\}.
\end{equation}
From here on, to keep notation simple, $p(z)$ will denote the minimizer of such a problem, with $f$ specified accordingly, and
the minimal matrix of  \eqref{eq:predict} will be denoted by $\Omega$. That is,
\[
\Omega:= \dlbrack p,p\drbrack_f
\]
while the minimal value of \eqref{eq:predictSpect2} is $\trace{\Omega}$.
The minimizer $p$ is precisely the image under the Kolmogorov isomorphism of the optimal {\em prediction error} $\bg$ and $\Omega$ the {\em prediction-error variance}.

\subsection{Spectral factors and optimal prediction}
For a non-deterministic process the error variance $\Omega$ has full rank.
Equivalently, the product of its eigenvalues is non-zero.
The well-known Szeg\"{o}-Kolmogorov formula \cite[pg. 369]{Masani_RecentTrends}
\begin{equation}\label{Szego_Kolmogorov}
\det \Omega = \exp\{\intpi \log\det f(\theta) \dtheta\}
\end{equation}
relates the product of the eigenvalues of the optimal one-step-ahead prediction error variance with the corresponding PSD.
No expression is available in general that would relate $f$ to $\Omega$ directly in the matricial case.

We consider only non-deterministic processes and hence we assume that
\[\log \det f(\theta)\in L_1[-\pi, \pi].\]
In this case, $f(\theta)$ admits a unique factorization
\begin{align}\label{SpectralFactorize}
f(\theta)=f_+(e^{\jj\theta})f_+(e^{\jj\theta})^*,
\end{align}
with $f_+(e^{\jj\theta})\in \cH_2^{m\times m}(\mD)$,
\[
\det(f_+(z))\neq 0 \mbox{ in } {\mathbb D}:=\{z: |z|<1\},
\]
and normalized so that $f_+(0)=\Omega^\frac12$. Throughout, $M^\frac12$ denotes the Hermitian square root of a Hermitian matrix $M$. The factor $f_+$ is known as the  {\em canonical (left) spectral factor}.
In the case where $f$ is a scalar function ($m=1$) the canonical spectral factor is explicitly given by
\[
f_+(z)=\exp\left\{  \frac12 \intpi \left(\frac{1+ze^{-\jj\theta}}{1-ze^{-\jj\theta}}\right)\log f(\theta)\dtheta \right\},~~|z|<1,
\]
As usual, $\cH_2(\mD)$ denotes the Hardy space of functions which are analytic in the unit disk $\mD$ with square-integrable radial limits. Spectral factorization presents an ``explicit'' expression of
the optimal prediction error in the form
\begin{equation}\label{eq:spectral_factor}
p(z)=f_+(0)f_+^{-1}(z).
\end{equation}
Thus, $p(z)^{-1}$ is a ``normalized'' (left) {\em outer} factor of $f$.  The terminology ``outer'' refers to a (matrix-valued) function $g(e^{\jj\theta})$ for $\theta\in[-\pi,\pi]$ that can be extended into an analytic function in the open interior of the unit disc $\mD$ which is also invertible in $\mD$. 
It is often standard not to differentiate between such a function in $\mD$ and the function on the boundary of radial-limits since these are uniquely defined from one another. In the engineering literature outer functions are also referred to as ``minimum phase.'' Right-outer factors, where $f(\theta)=f_{+,\rm right}(e^{\jj\theta})^*f_{+,\rm right}(e^{\jj\theta})$ instead of \eqref{SpectralFactorize}
relate to a {\em post-diction} optimal estimation problem; in this, the present value of the process is estimated via linear combination of future values (see e.g., \cite{Georgiou_CPD}). Only left factorizations will be used in the present paper.

\section{Comparison of PSD's}\label{sec:III}

We present two complementing viewpoints on how to compare two PSD's, $f_1$ and $f_2$.
In both, the optimal one-step-ahead predictor for one of the two stochastic processes, is applied to the other and compared to the corresponding optimal.
The first is to consider how ``white'' the power spectrum of the innovations' process is.
The second viewpoint is to compare how the error variance degrades with respect to the optimal predictor. Either principle provides a family of divergence measures and a suitable generalization of the Riemannian geometry of scalar PSD's given in \cite{Georgiou_distance}.
There is a close relationship between the two.

\subsection{Prediction errors and innovations processes}

Consider two matrix-valued spectral density functions $f_1$ and $f_2$.
Since an optimal filter will be designed based on one of the two and then evaluated with respect to the other, some notation is in order.

First, let us use a subscript to distinguish between two processes $\bu_i(k)$, $i\in\{1,2\}$, having the $f_i$'s as the corresponding PSD's. They are assumed purely nondeterministic, vector-valued, and of compatible size. The optimal filters in the spectral domain are
\begin{eqnarray*}
p_i&:=&\argmin\{\largedlbrack p,p\largedrbrack_{f_i} \;\:\; p(0)=I,\\
&&\phantom{ixxxxx}\mbox{ and } p\in \cH_2^{m\times m}(\mD)\},
\end{eqnarray*}
and their respective error covariances
\begin{eqnarray*}
\Omega_i&:=&\largedlbrack p_i,p_i\largedrbrack_{f_i}.
\end{eqnarray*}
Now define
\begin{eqnarray*}
\Omega_{i,j}&:=&\largedlbrack p_j,p_j\largedrbrack_{f_i}.
\end{eqnarray*}
Clearly, $\Omega_{i,j}$ is the variance of the prediction error when the filter $p_j$ is used on a process having power spectrum $f_i$. Indeed, if we set
\begin{align}
\bg_{i,j}:= \bu_{i}(0)-P_{j,1}\bu_i(-1)-P_{j,2}\bu_i(-2)-\ldots  \label{eq:innovation0}
\end{align}
the prediction-error covariance is
\[ \dlbrack  \bg_{i,j},  \bg_{i,j}\drbrack=\dlbrack p_j, p_j \drbrack_{f_i}.
\]

The prediction error $\bg_{i,j}$ can also be thought of as a time-process, indexed at time-instant $k\in\mZ$,
\begin{equation}\label{eq:innovation1}\nonumber
\bg_{ij}(k):= \bu_i(k)-P_{j,1}\bu_i(k-1)-P_{j,2}\bu_i(k-2)-\ldots
\end{equation}
for $i,j\in\{1,2\}$. This is an {\em innovations process}. Clearly, from stationarity,
\[
\dlbrack  \bg_{i,i},  \bg_{i,i}\drbrack=\Omega_{i},
\]
whereas
\[
\dlbrack  \bg_{i,j},  \bg_{i,j}\drbrack\geq\Omega_{i},
\]
since in this case $p_j$ is suboptimal for $\bu_i$, in general.

\subsection{The color of innovations and PSD mismatch}

We choose to normalize the innovations processes as follows:
\begin{equation}\label{eq:innovation2}\nonumber
\bh_{i,j}(k)=\Omega_{j}^{\nhalf}\bg_{i,j}(k), \mbox{ for } k\in\mZ.
\end{equation}
The Kolmogorov isomorphism takes
\[
\varphi\;:\; \bh_{i,j}(k) \mapsto f_{j+}^{-1},
\]
with the expectation/inner-product being that induced by $f_i$, and hence, the power spectral density of the process $\bh_{i,j}(k)$ is
\[
f_{\bh_{ij}}=f_{j+}^{-1}f_i f_{j+}^{-*},
\]
where $(\cdot)^{-*}$ is a shorthand for $((\cdot)^*)^{-1}$.
When $f_i=f_j$, evidently $\{\bh_{k}^{i,i}\}$ is a white noise process with covariance matrix equals to the identity.

Naturally, in an absolute sense, the mismatch between the two power spectra $f_i,f_j$ can be quantified by the distance of $f_{\bh_{ij}}$ to the identity.
To this end we may consider any symmetrized expression:
\begin{equation}\label{eq:classdistance}
\intpi \dist(f_{j+}^{-1}f_i f_{j+}^{-*}, I)  \dtheta+\intpi \dist(f_{i+}^{-1}f_j f_{i+}^{-*}, I)  \dtheta
\end{equation}
for a suitable distance $\dist(\cdot,\cdot)$ between positive definite matrices.
In general, it is deemed desirable that distances between power spectra are invariant to scaling (as is the case when distances depend on ratios of spectra, \cite{Gray_distortion}). Researchers and practitioners alike have insisted on such a property, especially for speech and image systems, due to an apparent agreement with subjective qualities of sound and images. It is thus interesting to seek a multivariable analogues inherent in the above comparison.

Due to the non-negative definiteness of power spectra, a convenient option is to take ``$\dist$'' as the trace:
\begin{eqnarray}\nonumber
\intpi \hspace*{-7pt}\trace\left( f_{j+}^{-1}f_i f_{j+}^{-*}-I\right)+
\trace\left( f_{i+}^{-1}f_j f_{i+}^{-*}-I\right)\dtheta.
\end{eqnarray}
This indeed defines a distance measure since $(x+x^{-1}-2)$ is a non-negative function for $0<x\in\mR$ that vanishes only when $x=1$.
Thus, we define
\begin{subequations}\label{D2}
\begin{equation}
\D_1(f_1, f_2):=\intpi \hspace*{-5pt}\trace\left( f_{2}^{-1}f_1+f_{1}^{-1}f_2-2I\right)\dtheta.
\label{DistDef1}
\end{equation}

Interestingly, $\D_1(f_1, f_2)$ can be re-written as follows:
\begin{equation}\label{D2b}
\D_1(f_1, f_2)=\intpi \| f_1^{-1/2}f_2^{1/2}- {f_1^{1/2}}{f_2^{-1/2}}\|_{\rm Fr}^2 \dtheta
\end{equation}
where $\|M\|_{\rm Fr}^2:=\trace MM^*$
denotes the square of the Frobenius norm\footnote{$\sqrt{\trace MM^*}$ is also referred to also as the Hilbert-Schmidt norm.}.
\end{subequations}
It can be readily verified starting from the right hand side of \eqref{D2b} and simplifying this to match \eqref{DistDef1}.
It is now be easily seen that $\D_1(f_i,f_j)$ has a number of desirable properties listed in the following proposition.
\begin{prop} Consider $f_i,f_j$ being PSD's of non-deterministic processes and $g(e^{\jj\theta})$ an arbitrary outer matrix-valued function in $ \cH^{m\times m}_2(\mD)$.
The following hold:
\begin{itemize}
\item[(i)] $\D_1(f_i,f_j)\geq 0$.
\item[(ii)] $\D_1(f_i,f_j)= 0$ iff $f_i=f_j$ (a.e.).
\item[(iii)] $\D_1(f_i,f_j)=\D_1(f_j,f_i)$.
\item[(iv)] $\D_1(f_i,f_j)=\D_1(f_i^{-1},f_j^{-1})$.
\item[(v)] $\D_1(f_i,f_j)=\D_1(gf_ig^*,gf_jg^*)$.
\end{itemize}
\end{prop}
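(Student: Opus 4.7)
My plan is to leverage the two equivalent formulations \eqref{DistDef1} and \eqref{D2b} in parallel, routing each property through whichever representation makes it cleanest. The Frobenius-norm form \eqref{D2b} makes (i)--(iii) essentially automatic, while the trace form \eqref{DistDef1} handles the invariance properties (iv) and (v) via the cyclicity of the trace.

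For (i), positivity is immediate from \eqref{D2b} since the integrand is a pointwise squared Frobenius norm. For (ii), if $\D_1(f_i,f_j)=0$ then the integrand in \eqref{D2b} vanishes almost everywhere, i.e. $f_i^{-1/2}f_j^{1/2}=f_i^{1/2}f_j^{-1/2}$ a.e. Multiplying both sides by $f_i^{1/2}$ on the left and $f_j^{1/2}$ on the right yields $f_j=f_i$ a.e. The converse is trivial. For (iii), symmetry follows from \eqref{D2b} because swapping $f_i$ and $f_j$ negates the matrix inside the Frobenius norm and the norm is invariant under sign change; alternatively, \eqref{DistDef1} is manifestly symmetric in $(f_1,f_2)$.

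For (iv), I work from \eqref{DistDef1}. Replacing $f_i$ by $f_i^{-1}$ and $f_j$ by $f_j^{-1}$ swaps the two terms of the integrand: $(f_j^{-1})^{-1}(f_i^{-1}) = f_j f_i^{-1}$ and $(f_i^{-1})^{-1}(f_j^{-1}) = f_i f_j^{-1}$. Since $\trace(f_j f_i^{-1}) = \trace(f_i^{-1}f_j)$ and $\trace(f_i f_j^{-1}) = \trace(f_j^{-1}f_i)$ by cyclicity, the integrand is pointwise unchanged.

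For (v), again from \eqref{DistDef1}, $(gf_ig^*)^{-1}(gf_jg^*) = g^{-*}f_i^{-1}f_j g^*$, so by cyclicity $\trace(g^{-*}f_i^{-1}f_j g^*) = \trace(f_i^{-1}f_j \cdot g^*g^{-*}) = \trace(f_i^{-1}f_j)$, and symmetrically for the other summand. Thus the integrand is pointwise invariant under the congruence $f\mapsto gfg^*$. I should also remark, before applying this calculation, that the outer hypothesis on $g$ ensures $g$ is pointwise invertible a.e.\ (so $g^{-*}$ is well defined on the unit circle) and that $gf_ig^*$, $gf_jg^*$ remain PSDs of non-deterministic processes so the statement is meaningful; no further use of ``outer'' is needed, since the algebraic identity is purely a cyclicity argument. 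None of the steps presents a genuine obstacle --- the only point requiring care is justifying the a.e.\ manipulations in (ii) and (v), which relies on $f_i,f_j$ being invertible a.e.\ (a consequence of the non-deterministic assumption via \eqref{Szego_Kolmogorov}).
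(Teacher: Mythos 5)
Your proof is correct and follows essentially the same route as the paper: read off (i)--(iii) from the Frobenius form \eqref{D2b} and establish the congruence invariance (v) from the trace form \eqref{DistDef1} via cyclicity. The only cosmetic difference is that you derive (iv) from \eqref{DistDef1} with a trace-cyclicity argument, whereas the paper notes it follows just as immediately from \eqref{D2b} (swapping $f_i,f_j$ for their inverses merely flips the sign of the matrix inside the norm); both are fine, and your added care about a.e.\ invertibility is a harmless elaboration.
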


\vspace*{.1in}
\begin{proof}
Properties (i-iv) follow immediately from \eqref{D2b} while the invariance property (v) is most easily seen by employing \eqref{DistDef1}.
\end{proof}

\subsection{Suboptimal prediction and PSD mismatch}\label{sec:deg}

We now attempt to quantify how suboptimal the performance of a filter is when this is based on the incorrect choice between the two alternative PSD's. To this end, we consider the error covariance and compare it to that of the optimal predictor. A basic inequality between these error covariances is summarized in the following proposition.

\begin{subequations}\label{}

\begin{prop}\label{prop:inequalities} Under our earlier standard assumptions, for $i,j\in\{1,2\}$ and $\Omega_i, \Omega_j>0$, it holds that
\begin{equation}\label{first_inequality}
\Omega_{i,j}\geq \Omega_i.
\end{equation}
Further, the above holds as an equality iff $p_i=p_j$.
\end{prop}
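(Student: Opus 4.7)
The plan is to reduce both sides to integrals of a single $\mathcal{H}_2^{m\times m}(\mathbb{D})$-valued function and then apply a Parseval-type identity, using only the constraint at $z=0$.

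First I would invoke the spectral factorization \eqref{SpectralFactorize} for $f_i = f_{i+}f_{i+}^*$, with the normalization $f_{i+}(0)=\Omega_i^{1/2}$. By \eqref{eq:spectral_factor} this gives $p_i = \Omega_i^{1/2}\,f_{i+}^{-1}$, so that the product $p_i f_{i+}$ is the constant matrix $\Omega_i^{1/2}$. Next, for any admissible candidate $p$ in the optimization defining $p_i$ (in particular $p=p_j$, which satisfies $p_j(0)=I$ and lies in $\mathcal{H}_2^{m\times m}(\mathbb{D})$), I would rewrite
\[
\dlbrack p,p\drbrack_{f_i}=\intpi \bigl(p f_{i+}\bigr)\bigl(p f_{i+}\bigr)^*\dtheta.
\]
Since $p$ and $f_{i+}$ are both in $\mathcal{H}_2^{m\times m}(\mathbb{D})$ and the minimum exists (by the discussion following \eqref{eq:predictSpect2}), the product $g:=p f_{i+}$ is a well-defined $\mathcal{H}_2^{m\times m}(\mathbb{D})$ function with Taylor expansion $g(z)=\Omega_i^{1/2}+\sum_{k\ge 1} A_k z^k$, the zeroth coefficient being forced by $p(0)=I$ and $f_{i+}(0)=\Omega_i^{1/2}$.

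The key step is then a matricial Parseval identity applied to $g$, which gives
\[
\intpi g(e^{\jj\theta})g(e^{\jj\theta})^*\dtheta = \Omega_i + \sum_{k\ge 1} A_k A_k^*.
\]
Each term $A_k A_k^*$ is non-negative definite, so the whole sum dominates $\Omega_i$ in the positive-definite order, yielding $\dlbrack p,p\drbrack_{f_i}\ge \Omega_i$. Specializing to $p=p_j$ gives $\Omega_{i,j}\ge \Omega_i$.

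For the equality case, I would read the above identity as a necessary and sufficient condition: $\dlbrack p_j,p_j\drbrack_{f_i}=\Omega_i$ forces $A_k A_k^*=0$, hence $A_k=0$, for every $k\ge 1$. This means $p_j f_{i+}$ is the constant $\Omega_i^{1/2}$, which coincides with $p_i f_{i+}$; since $f_{i+}$ is invertible a.e.\ (as $f_i$ corresponds to a non-deterministic process), we conclude $p_j=p_i$ a.e. The main obstacle — really the only delicate point — is justifying that $p_j f_{i+}\in\mathcal{H}_2^{m\times m}(\mathbb{D})$ so that the Parseval step is legitimate; this follows from the standing hypothesis $\log\det f_i\in L_1$ together with the boundedness of the coefficient sums implicit in $p_j\in\mathcal{H}_2^{m\times m}(\mathbb{D})$, but it is worth stating explicitly before invoking Parseval.
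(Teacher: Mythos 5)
Your argument is correct, but it takes a different route from the paper. The paper's proof is a one-line appeal to the variational characterization of $p_i$: since $p_j$ is admissible in the problem defining $p_i$ (it satisfies $p_j(0)=I$ and belongs to the admissible class), minimality of $p_i$ in the positive-definite order gives $\Omega_{i,j}=\dlbrack p_j,p_j\drbrack_{f_i}\geq \dlbrack p_i,p_i\drbrack_{f_i}=\Omega_i$ directly, with the optimality itself resting on the prediction theory recalled in Section II and \cite{Georgiou_CPD}. You instead re-derive that optimality from scratch: writing $\dlbrack p,p\drbrack_{f_i}=\intpi (pf_{i+})(pf_{i+})^*\dtheta$, expanding $g=pf_{i+}$ in Taylor coefficients with $g(0)=\Omega_i^{1/2}$, and invoking the matricial Parseval identity so that the higher coefficients contribute a sum of nonnegative-definite terms. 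What your approach buys is self-containedness and, more importantly, an explicit proof of the equality statement ($\Omega_{i,j}=\Omega_i$ forces all $A_k=0$, hence $p_jf_{i+}\equiv\Omega_i^{1/2}=p_if_{i+}$ and $p_j=p_i$), which the paper's printed proof does not address at all; what the paper's route buys is brevity, by leaning on the established minimizer property. One caveat on the delicate point you flag: the justification you sketch for $p_jf_{i+}\in\cH_2^{m\times m}(\mD)$ is not quite the right mechanism --- $\log\det f_i\in L_1$ guarantees the factor $f_{i+}$ exists, not that the product is square-integrable. The clean argument is that $g=p_jf_{i+}$ lies in the Smirnov class as a product of two $\cH_2$ functions, and its boundary values are square-integrable exactly when $\Omega_{i,j}=\intpi g\,g^*\dtheta$ is finite (if it is not, the inequality is vacuously true), so Smirnov's theorem places $g$ in $\cH_2$ and Parseval applies; with that repair your proof is complete.
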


\vspace*{.1in}
\begin{proof} It follows from the optimality of $p_i$ since
\[ \dlbrack p_j, p_j \drbrack_{f_i} \geq \dlbrack p_i, p_i \drbrack_{f_i}= \Omega_i.\]
\end{proof}

\begin{cor} The following hold:
\begin{eqnarray}
\Omega_i^{-\frac12}\Omega_{i,j}\Omega_i^{-\frac12}&\geq& I \label{second}\\
\det(\Omega_{i,j})&\geq& \det(\Omega_i) \label{third}\\
\trace(\Omega_{i,j})&\geq& \trace(\Omega_i)\label{forth}\\
\Omega_j^{-\frac12}\Omega_{i,j}\Omega_j^{-\frac12}&\geq& \Omega_j^{-\frac12}\Omega_i\Omega_j^{-\frac12}.\label{fifth}
\end{eqnarray}
Further, each ``$\geq$'' holds as equality iff $p_i=p_j$.
\end{cor}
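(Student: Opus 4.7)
The plan is to deduce all four inequalities from Proposition \ref{prop:inequalities}, namely $\Omega_{i,j}\geq \Omega_i$, by simple operator-monotone manipulations, and then read off the equality conditions from the one already established in the proposition.

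First I would establish \eqref{second} by congruence: since $\Omega_i>0$, the matrix $\Omega_i^{-\frac12}$ is well-defined and Hermitian, and congruence by a nonsingular Hermitian matrix preserves the Löwner order, so $\Omega_{i,j}\geq \Omega_i$ yields
\[
\Omega_i^{-\frac12}\Omega_{i,j}\Omega_i^{-\frac12}\;\geq\;\Omega_i^{-\frac12}\Omega_i\Omega_i^{-\frac12}=I.
\]
The same congruence idea, but with $\Omega_j^{-\frac12}$ on both sides, proves \eqref{fifth} directly from $\Omega_{i,j}\geq \Omega_i$.

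Next, \eqref{forth} is obtained by taking traces in $\Omega_{i,j}-\Omega_i\geq 0$, which forces $\trace(\Omega_{i,j}-\Omega_i)\geq 0$ since the trace of a positive semidefinite matrix is non-negative. For \eqref{third}, I would use \eqref{second}: any Hermitian matrix $M\geq I$ has all eigenvalues at least $1$, hence $\det M\geq 1$; applied to $M=\Omega_i^{-\frac12}\Omega_{i,j}\Omega_i^{-\frac12}$ this yields $\det(\Omega_{i,j})/\det(\Omega_i)\geq 1$, i.e.\ \eqref{third}.

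Finally, for the equality clauses: in each step a strict inequality $\Omega_{i,j}>\Omega_i$ is preserved (congruence by a nonsingular matrix, taking trace of a nonzero positive semidefinite matrix, and $\det M>1$ as soon as $M\geq I$ with $M\neq I$), so each of \eqref{second}--\eqref{fifth} holds as equality exactly when $\Omega_{i,j}=\Omega_i$, which by Proposition \ref{prop:inequalities} is equivalent to $p_i=p_j$. No step is really an obstacle; the only mild subtlety is noting that $\det M=1$ together with $M\geq I$ implies $M=I$, so that the equality case in \eqref{third} also reduces cleanly to the equality case in Proposition \ref{prop:inequalities}.
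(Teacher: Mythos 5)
Your proof is correct and follows exactly the route the paper intends: the corollary is stated without proof as an immediate consequence of Proposition \ref{prop:inequalities}, and your congruence/trace/determinant deductions from $\Omega_{i,j}\geq\Omega_i$, together with the observation that each equality case forces $\Omega_{i,j}=\Omega_i$, are the standard filling-in of that omitted argument. One minor wording point: you speak of a ``strict inequality $\Omega_{i,j}>\Omega_i$'' being preserved, but what you actually use (and what is true) is only $\Omega_{i,j}\neq\Omega_i$ under the L\"owner order $\geq$, which your specific claims about trace, congruence, and determinant already handle correctly.
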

\end{subequations}

Thus, a mismatch between the two spectral densities can be quantified by the strength of the above inequalities. To this end, we may consider a number of alternative ``divergence measures''. First we consider:
\begin{eqnarray}
\D_2(f_i,f_j)&:=& \log\det\left(\Omega_i^{-\frac12}\Omega_{i,j}\Omega_i^{-\frac12}\right). \label{D1}
\end{eqnarray}
Equivalent options leading to the same Riemannian structure are:
\begin{subequations}\label{eq:options}
\begin{eqnarray}
&&\frac{1}{m}\trace(\Omega_i^{-\frac12}\Omega_{i,j}\Omega_i^{-\frac12})-1 \label{D2aa}, \mbox{ and}\\
&&\det(\Omega_i^{-\frac12}\Omega_{i,j}\Omega_i^{-\frac12})-1.
\end{eqnarray}
\end{subequations}

Using the generalized Szeg\"{o}-Kolmogorov expression \eqref{Szego_Kolmogorov} we readily obtain that
\begin{eqnarray}\label{deltap}
\D_2(f_i,f_j) &=&\log \det\!\left( \!\intpi f_{j+}^{-1}f_if_{j+}^{-*}\dtheta\!\right)\!-\!\intpi\log \det \! \left(\!f_{j+}^{-1}f_if_{j+}^{-*}\!\right)\dtheta\\\nonumber
&=&\trace \left(\log \intpi f_{j+}^{-1}f_if_{j+}^{-*}\dtheta-\intpi \log f_{j+}^{-1}f_if_{j+}^{-*}\dtheta\right)\nonumber.
\end{eqnarray}
This expression takes values in $[0, \infty]$, and is zero if and only if the normalized spectral factors $p^{-1}=\Omega^{-1/2}f_+$ are identical for the two spectra.
Further, it provides a natural generalization of the divergence measures in \cite{Georgiou_distance} and of the Itakura distance to the case of multivariable spectra. It satisfies ``congruence invariance.'' This is stated next.

\begin{prop}\label{prop:g_outer}
Consider two PSD's $f_i,f_j$ of non-deterministic processes and $g(e^{\jj\theta})$ an outer matrix-valued function in $ \cH^{m\times m}_2(\mD)$.
The following hold:
\begin{itemize}
\item[(i)] $\D_2(f_i,f_j)\geq 0$.
\item[(ii)] $\D_2(f_i,f_j)= 0$ iff $p_i=p_j$.
\item[(iii)] $\D_2(f_i,f_j)=\D_2(gf_ig^*,gf_jg^*)$.
\end{itemize}
\end{prop}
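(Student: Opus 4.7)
The strategy is to dispatch (i) and (ii) directly from the corollary that immediately precedes the proposition, and to handle the congruence invariance in (iii) by tracking how the canonical spectral factorization transforms under $f\mapsto gfg^*$.

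For (i) and (ii), set $M_{ij} := \Omega_i^{-1/2}\Omega_{i,j}\Omega_i^{-1/2}$. Inequality \eqref{second} gives $M_{ij}\succeq I$, so every eigenvalue of the Hermitian matrix $M_{ij}$ is at least $1$ and $\D_2(f_i,f_j)=\log\det M_{ij}\geq 0$, proving (i). For (ii), $\log\det M_{ij}=0$ combined with $M_{ij}\succeq I$ forces all eigenvalues to equal $1$, so $M_{ij}=I$ and $\Omega_{i,j}=\Omega_i$; the equality clause in the corollary then yields $p_i=p_j$, and the converse is trivial from the definition.

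For (iii), write $\tilde f_k := g f_k g^*$. Since $g$ is outer in $\cH_2^{m\times m}(\mD)$, the product $g f_{k+}$ is outer, and the factorization $\tilde f_k = (g f_{k+})(g f_{k+})^*$ identifies $g f_{k+}$ as an outer spectral factor. By uniqueness of the canonical factor up to a constant unitary on the right, there exists a unitary $V_k$ with $\tilde f_{k+}(z) = g(z) f_{k+}(z) V_k$ and $\tilde f_{k+}(0)$ Hermitian positive definite. Applying the formula \eqref{eq:spectral_factor} to $\tilde f_k$, the unitary cancels and
\begin{equation*}
\tilde p_k(z) \;=\; \tilde f_{k+}(0)\,\tilde f_{k+}(z)^{-1} \;=\; g(0)\,f_{k+}(0)\,f_{k+}(z)^{-1}\,g(z)^{-1} \;=\; g(0)\,p_k(z)\,g(z)^{-1}.
\end{equation*}
Substituting into the definition of $\tilde\Omega_{i,j}=\dlbrack \tilde p_j,\tilde p_j\drbrack_{\tilde f_i}$, the inner $g^{-1}g$ and $g^*g^{-*}$ pairs collapse, leaving
\begin{equation*}
\tilde\Omega_{i,j} \;=\; g(0)\,\Omega_{i,j}\,g(0)^*, \qquad \tilde\Omega_i \;=\; g(0)\,\Omega_i\,g(0)^*.
\end{equation*}
Since $g$ is outer, $g(0)$ is invertible, and taking determinants gives $\det\tilde\Omega_{i,j}/\det\tilde\Omega_i = \det\Omega_{i,j}/\det\Omega_i$. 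Writing $\D_2(f_i,f_j)=\log\det\Omega_{i,j}-\log\det\Omega_i$ (which follows directly from \eqref{D1}) yields $\D_2(\tilde f_i,\tilde f_j)=\D_2(f_i,f_j)$, as claimed.

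The one step that deserves care, and the main potential obstacle, is the identification of the canonical spectral factor of $gf_kg^*$ as $gf_{k+}$ modulo a constant unitary; this rests on the outer-ness of a product of outers together with the uniqueness theory of canonical factors. Once that is in hand, the rest is bookkeeping: the transformation law $\tilde\Omega_{i,j}=g(0)\Omega_{i,j}g(0)^*$ immediately makes $\D_2$ invariant because the $|\det g(0)|^2$ factors cancel in the log-determinant difference.
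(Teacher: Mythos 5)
Your proof is correct, and parts (i)--(ii) coincide with the paper's argument: both read nonnegativity and the equality case straight off the definition \eqref{D1} together with the inequality \eqref{second} and its equality clause. For the congruence invariance (iii) you take a genuinely different, though closely related, route. The paper substitutes into the representation \eqref{deltap} of $\D_2$ (itself obtained from the Szeg\"{o}--Kolmogorov formula \eqref{Szego_Kolmogorov}) and observes that, since $gf_{j+}$ is a spectral factor of $gf_jg^*$, the integrand $f_{j+}^{-1}f_i f_{j+}^{-*}$ is literally unchanged pointwise, so the whole expression is invariant. You instead stay at the level of predictors and error covariances: from the same key fact (the canonical factor of $gf_kg^*$ is $gf_{k+}$ up to a constant right unitary, which cancels in \eqref{eq:spectral_factor}) you derive the transformation laws $\tilde p_k = g(0)\,p_k\,g^{-1}$ and $\tilde\Omega_{i,j}=g(0)\Omega_{i,j}g(0)^*$, and then cancel $|\det g(0)|^2$ in $\log\det\Omega_{i,j}-\log\det\Omega_i$. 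Your version avoids \eqref{deltap} and hence the Szeg\"{o}--Kolmogorov formula altogether, and it yields slightly more than is asked: the congruence law for $\Omega_{i,j}$ shows at once that the alternative divergences in \eqref{eq:options} are invariant as well, since $\tilde\Omega_i^{-1}\tilde\Omega_{i,j}$ is similar to $\Omega_i^{-1}\Omega_{i,j}$. The only point to flag, which you correctly identify and which the paper also passes over with ``obviously,'' is the outer-ness of the product $gf_{k+}$ and its membership in $\cH_2^{m\times m}(\mD)$ (guaranteed because its boundary values satisfy $(gf_{k+})(gf_{k+})^*=gf_kg^*\in L_1$); your level of rigor there matches the paper's.
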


\vspace*{.1in}
\begin{proof}
Properties (i-ii) follow immediately from \eqref{D1} while the invariance property (iii) is most easily seen be employing \eqref{deltap}.
To this end, first note that $gf_+$ obviously constitutes the spectral factor of $gfg^*$. Substituting the corresponding expressions in \eqref{deltap} establishes the invariance.
\end{proof}

\subsection{Alternative divergence measures}

Obviously, a large family of divergence measures between two matrix-valued power spectra can be obtained based on (\ref{eq:classdistance}). For completeness, we suggest representative possibilities some of which have been independently considered in recent literature.

\begin{subequations}\label{DistFro}
\subsubsection{Frobenius distance}
If we use the Frobenius norm in \eqref{eq:classdistance} we obtain
  \begin{align}\label{eq:DistFro}
  \D_\text{F}(f_1, f_2):={\frac12}\sum_{i,j} \intpi \|f_{j+}^{-1}f_i f_{j+}^{-*}- I\|_{\rm Fr}^2  \dtheta
  \end{align}
  where $\sum_{i,j}$ designates the ``symmetrized sum'' taking $(i,j)\in\{(1,2),(2,1)\}$.
It's straightforward to see that all of
\[
f_{j+}^{-1}f_i f_{j+}^{-*},\;f_{j}^{\nhalf}f_i f_{j}^{\nhalf}\mbox{ and }f_{j}^{-1}f_i
\]
share the same eigenvalues for any $\theta\in[-\pi,\pi]$. Thus,
  \[
  \|f_{j+}^{-1}f_i f_{j+}^{-*}- I\|_{\rm Fr}^2=\|f_{j}^{\nhalf}f_i f_{j}^{\nhalf}- I\|_{\rm Fr}^2,
  \]
  and
  \begin{equation}\label{DF2}
   \D_\text{F}(f_1, f_2)={\frac12}\sum_{i,j} \intpi  \|f_{j}^{\nhalf}f_i f_{j}^{\nhalf}- I\|_{\rm Fr}^2 \dtheta .
  \end{equation}
 Obviously \eqref{DF2} is preferable over \eqref{eq:DistFro} since no spectral factorization is involved.
 \end{subequations}

\subsubsection{Hellinger distance}

A generalization of the Hellinger distance has been proposed in \cite{Ferrante_hellinger} for comparing multivariable spectra.
Briefly, given two positive definite matrices $f_1$ and $f_2$
one seeks factorizations $f_i=g_ig_i^*$ so that the integral over frequencies of the Frobenius distance $\|g_1-g_2\|_{\rm Fr}^2$ between the factors is minimal.
The factorization does not need to correspond to analytic factors.
When one of the two spectra is the identity, the optimization is trivial and the Hellinger distance becomes
\[\intpi \|f^\frac12 - I\|_{\rm Fr}^2\dtheta.
\]
A variation of this idea is to compare the normalized innovation spectra 
$(f_{j+}^{-1}f_i f_{j+}^{-*})^{\frac12}$, for $i,j\in\{1,2\}$, to the identity. We do this in a symmetrized fashion so that together with symmetry the metric inherits the inverse-invariance property. Thus, we define
  \begin{align}\label{eq:DistHellinger}
    \D_\text{H}(f_1, f_2)\!&:=
 \!\!\sum_{i,j} \intpi \|(f_{j+}^{-1}f_i f_{j+}^{-*})^{\frac12}- I\|_{\rm Fr}^2  \dtheta \\ & =
  \! \sum_{i,j} \intpi \|(f_{j}^{\nhalf}f_i f_{j}^{\nhalf})^{\frac12}- I\|_{\rm Fr}^2  \dtheta .\nonumber
    \end{align}
The second equality follows by the fact that $f_{j+}f_{j}^{-\frac12}$ is a frequency-dependent unitary matrix.

\subsubsection{Multivariable Itakura-Saito distance}
The classical Itakura-Saito distance can be readily generalized by taking
\[
      \dist(f, I)=\trace (f -\log f-I).
\]
The values are always positive for $I\neq f>0$ and equal to zero when $f=I$.  Thus, we may define
     \begin{align}\label{eq:DistItakura}
      \D_{\text{IS}}(f_1, f_2)&=\intpi \dist(f_{2+}^{-1}f_1 f_{2+}^{-*}, I) \dtheta \\
      &=\intpi \left( \trace(f_{2}^{-1}f_1)-\log\det(f_{2}^{-1}f_1)-m \right) \dtheta.\nonumber
     \end{align}
The Itakura-Saito distance has its origins in maximum likelihood estimation for speech processing and is related to the Kullback-Leibler divergence between the probability laws of two Gaussian random processes \cite{Gray_distortion,Pinsker_information}. More recently, \cite{Ferrante_time} introduced the matrix-version of the Itakura-Saito distance for solving the state-covariance matching problem in a multivariable setting.

\subsubsection{Log-spectral deviation}
It has been argued that a logarithmic measure 
of spectral deviations is in agreement with perceptive qualities of sound and for this reason it has formed the basis for the oldest distortion measures considered \cite{Gray_distortion}. In particular, the $L_2$ distance between the logarithms of power spectra is referred to as ``Log-spectral deviation'' or the ``logarithmic energy.''  A natural multivariable version
is to consider
\[ \dist(f,I)=\|\log(f)\|^2_{\rm Fr}.\]
This expression is already symmetrized, since $\dist(f,I)=\dist(f^{-1},I)$ by virtue of the fact that the eigenvalues of $\log(f)$ and those of $\log(f^{-1})$ differ only in their sign. Thereby,
  \[
  \| \log(f_{j+}^{-1}f_i f_{j+}^{-*}) \|_{\rm Fr}^2=\| \log(f_{i+}^{-1}f_j f_{i+}^{-*}) \|_{\rm Fr}^2.
  \]
Thus we define
  \begin{align}\label{eq:DistLog}
 \D_{\text{Log}}(f_1, f_2)&:=\intpi \|\log(f_{1+}^{-1}f_2 f_{1+}^{-*})\|_{\rm Fr}^2  \dtheta \\
  &=\intpi \|\log(f_{1}^{\nhalf}f_2 f_{1}^{\nhalf})\|_{\rm Fr}^2  \dtheta .\nonumber
  \end{align}
This represents a multivariable version of the log-spectral deviation (see \cite[page 370]{Gray_distortion}). Interestingly, as we will see later on, $\D_\text{Log}(f_1, f_2)$  possesses several useful properties and, in fact, its square root turns out to be precisely a geodesic distance in a suitable Riemannian geometry.

\section{Riemannian structure on multivariate spectra}\label{sec:Rie}

Consider a ``small'' perturbation $f+\Delta$ away from a nominal power spectral density $f$.
All divergence measures that we have seen so far are continuous in their arguments and, in-the-small, can be approximated by a quadratic form in $\Delta$ which depends continuously on $f$. This is what is referred to as a {\em Riemannian metric}. The availability of a metric gives the space of power spectral densities its properties. It dictates how perturbations in various directions compare to each other. It also provides additional important concepts: geodesics, geodesic distances, and curvature. Geodesics are paths of smallest length connecting the start to the finish; this length is the geodesic distance. Thus, geodesics in the space of power spectral densities represent deformations from a starting power spectral density $f_0$ to an end ``point'' $f_1$. Curvature on the other hand is intimately connected with approximation and convexity of sets.

In contrast to a general divergence measure, the geodesic distance obeys the triangular inequality and thus, it is a metric (or, a pseudo-metric when by design it is unaffected by scaling or other group of transformations). Geodesics are also natural structures for modeling changes and deformations. In fact, a key motivation behind the present work is to model time-varying spectra via geodesic paths in a suitable metric space. This viewpoint provides a non-parametric model for non-stationary spectra, analogous to a spectrogram, but one which takes into account the inherent geometry of power spectral densities.

Thus, in the sequel we consider infinitesimal perturbations about a given power spectral density function. We explain how these give rise to nonnegative definite quadratic forms. Throughout, we assume that all functions are smooth enough so that the indicated integrals exist. This can be ensured if all spectral density functions are bounded with bounded derivatives and inverses. Thus, we will restrict our attention to the following class of PDF's:
\begin{eqnarray*}
 \cF&:=&\{ f\;\mid \mbox{$m\times m$ positive definite, differentiable}\\
&&\phantom{x}\mbox{on }[-\pi,\pi], \mbox{ with continuous derivative}\}.
\end{eqnarray*}
In the above, we identify the end points of $[-\pi,\pi]$ since $f$ is thought of as a function on the unit circle.
Since the functions $f$ are strictly positive definite and bounded, tangent directions of $\cF$ consists of admissible perturbations $\Delta$. These need only be restricted to be differentiable with square integrable derivative, hence the tangent space at any $f\in\cF$ can be identified with
\begin{eqnarray*}
\cD&:=&\{\Delta \;\mid  \mbox{differentiable on }[-\pi,\pi]\\
&&\mbox{ with continuous derivative}\}.
\end{eqnarray*}

\subsection{Geometry based on the  ``flatness'' of innovations spectra}

\begin{subequations}
We first consider the divergence $\D_1$ in (\ref{DistDef1}-\ref{D2b}) which quantifies how far the PSD of the normalized innovations process is from being constant and equal to the identity.
The induced Riemannian metric takes the form
\begin{align}\label{MetricDef1}
\g_{1,f}(\Delta):=\intpi \|f^{-1/2}\Delta f^{-1/2} \|_{\rm Fr}^2 \dtheta.
\end{align}

\begin{prop}\label{Prop:Metric1}
Let $(f,\Delta) \in\cF\times\cD$ and $\epsilon>0$. Then, for $\epsilon$ sufficiently small,
\begin{eqnarray*}
\D_1(f,f+\epsilon\Delta)=\g_{1,f}(\epsilon\Delta)+O(\epsilon^3).
\end{eqnarray*}
\end{prop}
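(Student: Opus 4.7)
The plan is to Taylor-expand the integrand of \eqref{DistDef1} to second order in $\epsilon$, observe that the zeroth- and first-order contributions cancel, and then identify the remaining quadratic term with $\g_{1,f}(\epsilon\Delta)$.

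First I would set up uniform pointwise estimates. Since $f\in\cF$ is continuous and strictly positive definite on the compact interval $[-\pi,\pi]$, there exist constants $0<c\le C$ with $cI\le f(\theta)\le CI$ uniformly in $\theta$, and $\Delta\in\cD$ is continuous and hence uniformly bounded. Consequently $\|f(\theta)^{-1}\Delta(\theta)\|$ is dominated by a constant $K$ independent of $\theta$, and for $\epsilon<1/K$ the Neumann-type expansion
\begin{equation*}
(f+\epsilon\Delta)^{-1}=f^{-1}-\epsilon\,f^{-1}\Delta f^{-1}+\epsilon^{2}f^{-1}\Delta f^{-1}\Delta f^{-1}+R(\theta,\epsilon)
\end{equation*}
converges uniformly in $\theta$, with $\|R(\theta,\epsilon)\|\le M\epsilon^{3}$ for a suitable constant $M$. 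Multiplying on the right by $f$ yields $(f+\epsilon\Delta)^{-1}f=I-\epsilon\,f^{-1}\Delta+\epsilon^{2}f^{-1}\Delta f^{-1}\Delta+R(\theta,\epsilon)f$, while $f^{-1}(f+\epsilon\Delta)=I+\epsilon\,f^{-1}\Delta$ is exact. Summing these two identities and subtracting $2I$ annihilates both the constant and the linear-in-$\epsilon$ contributions, leaving
\begin{equation*}
(f+\epsilon\Delta)^{-1}f+f^{-1}(f+\epsilon\Delta)-2I=\epsilon^{2}f^{-1}\Delta f^{-1}\Delta+R(\theta,\epsilon)f.
\end{equation*}

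Next I would take traces and integrate over $[-\pi,\pi]$. The remainder contributes at most an $O(\epsilon^{3})$ term, uniformly in $\theta$, by the bounds above together with finiteness of the interval. Hence
\begin{equation*}
\D_1(f,f+\epsilon\Delta)=\epsilon^{2}\intpi \trace\!\bigl(f^{-1}\Delta f^{-1}\Delta\bigr)\dtheta+O(\epsilon^{3}).
\end{equation*}
It remains to match the leading term to $\g_{1,f}(\epsilon\Delta)$. Using cyclicity of the trace, together with the Hermiticity of $\Delta$ (implicit in the requirement that $f+\epsilon\Delta$ remain a Hermitian PSD for small $\epsilon$), the integrand satisfies
\begin{equation*}
\trace\!\bigl(f^{-1}\Delta f^{-1}\Delta\bigr)=\trace\!\bigl((f^{-\frac12}\Delta f^{-\frac12})(f^{-\frac12}\Delta f^{-\frac12})^{*}\bigr)=\|f^{-\frac12}\Delta f^{-\frac12}\|_{\rm Fr}^{2},
\end{equation*}
so the quadratic term is exactly $\g_{1,f}(\epsilon\Delta)$ by definition \eqref{MetricDef1}, proving the claim.

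The only genuinely substantive issue is the uniform-in-$\theta$ remainder estimate; I anticipate this to be the main obstacle, but it reduces immediately to the uniform bounds on $f$, $f^{-1}$ and $\Delta$ afforded by the compactness of $[-\pi,\pi]$ together with the regularity built into $\cF$ and $\cD$. Everything else is routine algebra once the cancellation of the linear term is noticed.
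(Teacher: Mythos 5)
Your proposal is correct and follows essentially the same route as the paper: a second-order Neumann/Taylor expansion of the inverse, cancellation of the zeroth- and first-order terms between the two summands of $\D_1$, and identification of the remaining quadratic term with $\g_{1,f}(\epsilon\Delta)$ via cyclicity of the trace. The only difference is cosmetic --- the paper conjugates by $f^{-1/2}$ before expanding, whereas you expand $(f+\epsilon\Delta)^{-1}$ directly and symmetrize at the end, adding the uniform-in-$\theta$ remainder bounds that the paper leaves implicit.
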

\vspace*{.1in}

\begin{proof}
First note that
\begin{align*}
\trace \left( f(f+\epsilon\Delta)^{-1}\right)&=\trace \left(f^{1/2}(I+f^{-1/2}\epsilon\Delta f^{-1/2} )^{-1}f^{-1/2}\right)\\
&=\trace \left(I+f^{-1/2}\epsilon\Delta f^{-1/2} \right)^{-1}\\
\trace \left( f(f+\epsilon\Delta)^{-1}\right)&=m-\trace (f^{-1/2}\epsilon \Delta f^{-1/2})\\
&~+\|f^{-1/2}\epsilon\Delta f^{-1/2} \|_{\rm Fr}^2+O(\epsilon^3).
\end{align*}
Likewise,
\begin{align*}
\trace(f+\epsilon\Delta)f^{-1}&=m+\trace(\epsilon\Delta f^{-1})\\
&=m+\trace (f^{-1/2}\epsilon \Delta f^{-1/2}).
\end{align*}
Therefore,
\begin{align*}
\D_1(f, f\!+\!\epsilon\Delta)&=\trace \!\intpi \!\!\left(f(f+\epsilon\Delta)^{-1}\!+\! (f+\epsilon\Delta)f^{-1}\!-\!2I\right)\dtheta\\
&=\intpi \|f^{-1/2}\epsilon\Delta f^{-1/2} \|_{\rm Fr}^2 \dtheta+ O(\epsilon^3).
\end{align*}
\end{proof}

Obviously, an alternative expression for $\g_{1,f}$ that requires neither spectral factorization nor the computation of the Hermitian square root of $f$, is the following: 
\begin{align}\label{MetricDef1b}
\g_{1,f}(\Delta):=\intpi \trace\left(f^{-1}\Delta f^{-1} \Delta\right) \dtheta.
\end{align}
It is interesting to also note that any of  (\ref{DistFro}), (\ref{eq:DistHellinger}), (\ref{eq:DistItakura}), and (\ref{eq:DistLog}) leads to the same Riemannian metric.
\end{subequations}

\subsection{Geometry based on suboptimality of prediction}

The paradigm in \cite{Georgiou_distance}
for a Riemannian structure of scalar power spectral densities was originally built on the degradation of predictive error variance, as this is reflected in the strength of the inequalities of Proposition \ref{prop:inequalities}. In this section we explore the direct generalization of that route. Thus, we consider the quadratic form which $\cF$ inherits from the relevant divergence $\D_2$, defined in \eqref{D1}. The next proposition shows that this defines the corresponding metric:
\begin{eqnarray}\nonumber
\g_{2,f}(\Delta)&:=&\hspace*{-2pt}\trace\intpi \hspace*{-2pt}(f^{-1}_+\Delta f^{-*}_+)^2\dtheta-\trace\big(\intpi \hspace*{-2pt}f^{-1}_+\Delta f^{-*}_+\dtheta\big)^2\\\label{g1a}
&=&\hspace*{-2pt}\g_{1,f}(\Delta)-\trace\big(\intpi f^{-1}_+\Delta f^{-*}_+\dtheta\big)^2.
\end{eqnarray}

\begin{prop}\label{prop:dfDeltaf}
Let $(f,\Delta) \in\cF\times\cD$ and $\epsilon>0$. Then, for $\epsilon$ sufficiently small,
\begin{align*}
\D_2(f,f+\epsilon\Delta)=\frac{1}{2} \g_{2,f}(\epsilon\Delta)+O(\epsilon^3).
\end{align*}
\end{prop}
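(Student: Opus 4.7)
The plan is to start from the explicit formula (\ref{deltap}) for $\D_2$, namely
\[
\D_2(f, f+\epsilon\Delta) = \trace\log\left(\intpi h\,\dtheta\right) - \intpi\trace\log h\,\dtheta,
\]
where $h := f_{j+}^{-1}f f_{j+}^{-*}$ with $f_j := f + \epsilon\Delta$, and to Taylor-expand both terms to second order in $\epsilon$. The first-order contributions will cancel after interchanging trace and integration, leaving a second-order piece that matches $\tfrac{1}{2}\g_{2,f}(\epsilon\Delta)$ exactly.

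The key observation simplifying the expansion of $h$ is the exact algebraic identity
\[
h = f_{j+}^{-1}\bigl(f_{j+}f_{j+}^* - \epsilon\Delta\bigr)f_{j+}^{-*} = I - \epsilon\, f_{j+}^{-1}\Delta f_{j+}^{-*},
\]
obtained from $f_{j+}f_{j+}^* = f + \epsilon\Delta$. Since the canonical outer spectral factor depends continuously on $f_j\in\cF$, we have $f_{j+} = f_+ + O(\epsilon)$ uniformly in $\theta$. Setting $H := f_+^{-1}\Delta f_+^{-*}$, this gives $h = I - \epsilon H + O(\epsilon^2)$.

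Next, apply the matrix logarithm expansion $\log(I+Y) = Y - \tfrac{1}{2}Y^2 + O(\|Y\|^3)$ both to $h$ and to $\bar h := \intpi h\,\dtheta = I - \epsilon\intpi H\,\dtheta + O(\epsilon^2)$. Taking traces yields
\[
\intpi\trace\log h\,\dtheta = -\epsilon\intpi\trace H\,\dtheta - \tfrac{\epsilon^2}{2}\intpi\trace H^2\,\dtheta + O(\epsilon^3),
\]
\[
\trace\log\bar h = -\epsilon\intpi\trace H\,\dtheta - \tfrac{\epsilon^2}{2}\trace\left(\intpi H\,\dtheta\right)^2 + O(\epsilon^3).
\]
Any $O(\epsilon^2)$ contribution from the higher-order term in the expansion of $h$ appears in both lines and cancels by the same swap of trace and integration. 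Subtracting, the first-order pieces also drop out, and what remains is precisely $\tfrac{\epsilon^2}{2}\bigl[\intpi\trace H^2\,\dtheta - \trace(\intpi H\,\dtheta)^2\bigr]$, which equals $\tfrac{1}{2}\g_{2,f}(\epsilon\Delta)$ by \eqref{g1a}.

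The only subtle point is the justification that $f_{j+}^{-1}\Delta f_{j+}^{-*} = f_+^{-1}\Delta f_+^{-*} + O(\epsilon)$ uniformly in $\theta$. This follows from continuous dependence of the canonical outer spectral factor on the spectrum, which holds within $\cF$ since the factorization operation is smooth on strictly positive and continuously differentiable spectra; the uniform error bound then propagates cleanly through the Taylor expansion of the matrix logarithm and justifies the $O(\epsilon^3)$ remainders above.
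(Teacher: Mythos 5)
Your overall strategy is the same as the paper's: expand $\trace\log$ of the normalized innovation spectrum to second order, observe that the linear terms cancel exactly because trace and integration commute, and identify the surviving quadratic terms with $\tfrac12\g_{2,f}(\epsilon\Delta)$ via \eqref{g1a}. The difference is which spectrum you factor, and that is where the one real gap sits. The paper expands the expression with the factor of the \emph{unperturbed} spectrum on the outside, i.e.\ it uses the exact identity $f_+^{-1}(f+\epsilon\Delta)f_+^{-*}=I+\Delta_\epsilon$ with $\Delta_\epsilon:=f_+^{-1}\epsilon\Delta f_+^{-*}$ (equivalently, it expands the divergence with the roles of the two spectra interchanged, which agrees with the stated quantity to the claimed order), so no perturbation theory for spectral factorization is ever needed. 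You instead read $\D_2(f,f+\epsilon\Delta)$ literally off \eqref{deltap}, factor $f_j:=f+\epsilon\Delta$, and use the (correct) exact identity $f_{j+}^{-1}ff_{j+}^{-*}=I-\epsilon\,f_{j+}^{-1}\Delta f_{j+}^{-*}$. Your cancellation argument for the linear terms is fine (they are equal \emph{exactly}, higher-order pieces included), the uniform $O(\epsilon^3)$ control of the logarithm's remainder is fine since $\|f_{j+}\|$ and $\|f_{j+}^{-1}\|$ are bounded on the boundary directly from $f_{j+}f_{j+}^*=f_j$, and the final identification with \eqref{g1a} is correct.

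The gap is the step $f_{j+}^{-1}\Delta f_{j+}^{-*}=f_+^{-1}\Delta f_+^{-*}+O(\epsilon)$ uniformly in $\theta$, which you assert by appealing to ``continuous dependence'' of the canonical factor on the spectrum. First, continuity alone would only give an $o(1)$ discrepancy, hence an $o(\epsilon^2)$ remainder rather than the claimed $O(\epsilon^3)$; you need Lipschitz/differentiable dependence of $f\mapsto f_+$ to get the $O(\epsilon)$ rate. Second, that dependence is a genuinely nontrivial fact for matrix-valued spectra (closeness of $f_{j+}$ to $f_+$ does not follow from the trivial boundary bounds above), and nothing in the paper supplies it. Two ways to close the gap: either switch to the paper's exact identity, expanding with $f_+$ outside so that no factorization of the perturbed density appears at all; or keep your route and reduce the claim to a clean lemma by writing $f+\epsilon\Delta=f_+(I+\epsilon H)f_+^*$ with $H:=f_+^{-1}\Delta f_+^{-*}$, so that $f_{j+}=f_+\,(I+\epsilon H)_+$ up to a constant unitary, and then prove (or cite) that the canonical spectral factor of $I+\epsilon H$ is $I+O(\epsilon)$ uniformly for $H$ in the regularity class at hand. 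As written, that lemma is assumed rather than proved, and it is exactly the ingredient the paper's choice of expansion avoids.
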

\vspace*{.1in}

\begin{proof}
In order to simplify the notation let
\[
\Delta_\epsilon:=f_+^{-1}\epsilon\Delta f_+^{-*}.
\]
Since $\Delta,f$ are both bounded, $|\trace(\Delta_\epsilon^k)|=O(\epsilon^k)$ as well as
$|\trace (\intpi \Delta_\epsilon \dtheta)^k|=O(\epsilon^k)$.
Using a Taylor series expansion,
\begin{align*}
\trace & \log \left(\intpi f_+^{-1}(f+\epsilon\Delta)f_+^{-*}\dtheta \right)\\
&=\trace\log \left(I+ \intpi \Delta_\epsilon\dtheta \right)\\
&=\trace\left(\intpi \Delta_\epsilon \dtheta\right)-\frac12 \trace \left(\intpi \Delta_\epsilon \dtheta\right)^2+O(\epsilon^3),
\end{align*}
while
\begin{align*}
\trace & \left( \intpi \log (f_+^{-1}(f+\epsilon\Delta)f_+^{-*})\dtheta\right)\\
&=\intpi \trace \log(I+\Delta_\epsilon)\dtheta\\
&=\intpi \trace (\Delta_\epsilon-\frac12 \Delta_\epsilon^2)\dtheta+O(\epsilon^3).
\end{align*}
Thus
\[
\D_2(f,f+\epsilon\Delta)=\frac12 \trace \left(\intpi \Delta_\epsilon^2 d\theta-\big(\intpi \Delta_\epsilon \dtheta\big)^2\right)+O(\epsilon^3).
\]
\end{proof}

Evidently, $\g_{2,f}$ and $\g_{1,f}$ are closely related.
The other choices of $\D$ similarly yield either $\g_{1,f}$, as noted earlier, or $\g_{2,f}$. In fact, $\g_{2,f}$ can be derived based on \eqref{eq:options}.

We remark a substantial difference between $\g_{1,f}$ and $\g_{2,f}$. In contrast to $\g_{2,f}$, evaluation of $\g_{1,f}$ does not require computing $f_+$. However, on the other hand, both $\g_{1,f}$, and $\g_{2,f}$ are similarly unaffected by consistent scaling of $f$ and $\Delta$.

\section{Geometry on positive matrices}\label{sec:geometryonmatrices}

As indicated earlier, a Riemannian metric $\g(\Delta)$ on the space of Hermitian $m\times m$ matrices is a family of quadratic forms originating from inner products that depend smoothly on the Hermitian ``foot point'' $M$ ---the standard Hilbert-Schmidt metric $\g_{\rm HS}(\Delta)=\langle \Delta,\Delta\rangle :=\trace(\Delta^2)$ being one such. Of particular interest are metrics on the space of positive definite matrices that ensure the space is complete and geodesically complete\footnote{A space is complete when Cauchy sequences converge to points in the space. It is geodesically complete when the definition domain of geodesics extends to the complete real line $\mR$; i.e., extrapolating the path beyond the end points remains always in the space.}. For our purposes, matrices typically represent covariances. To this end a standard recipe for constructing a Riemannian metric is to begin with an information potential, such as the Boltzmann entropy of a Gaussian distribution with zero mean and covariance $M$,
\[
S(M):=-\frac{1}{2}\log(\det(M)) + {\rm constant},
\]
and define an inner product via its Hessian
\begin{eqnarray*}
\langle X,Y\rangle_M&:=&\frac{\partial^2}{\partial x\partial y}S(M+xX+yY)\large|_{x=0,y=0}\\
&=&\trace (M^{-1}XM^{-1}Y).
\end{eqnarray*}
The Riemannian metric so defined,
\begin{eqnarray*}
\g_M(\Delta):&=&\trace (M^{-1}\Delta M^{-1}\Delta)\\
&=&\|M^{-\frac12}\Delta M^{-\frac12}\|_{\rm Fr}^2,
\end{eqnarray*}
is none other than the Fisher-Rao metric on Gaussian distributions expressed in the space of the corresponding covariance matrices.

The relationship of the Fisher-Rao metric on Gaussian distributions with the metric $\g_{1,f}$ in \eqref{MetricDef1b} is rather evident. Indeed, $\g_M$ coincides with $\g_{1,f}$ for 
power spectra which are constant across frequencies, i.e., taking $f=M$ to be a constant Hermitian positive definite matrix.

It is noted that $g_M(\Delta)$ remains invariant under congruence, that is, 
\[
\g_{M}(\Delta)=\g_{TMT^*}(T\Delta T^*)
\]
for any square invertible matrix-function $T$.
This is a natural property to demand since it implies that the distance between covariance matrices
does not change under coordinate transformations. 
The same is inherited by $\g_{1,f}$ for power spectra.
It is for this reason that $\g_M$ has in fact been extensively studied in the context of general $C^*$-algebras and their positive elements; we refer to \cite[pg. 201-235]{Bhatia_positive} for a nice exposition of relevant material and for further references. Below we highlight certain key facts that are relevant to this paper. But first, and for future reference, we recall a standard result in differential geometry.

\begin{prop}\label{Prop:EquiGeodesic}
Let $\cM$ be a Riemannian manifold with $\|\Delta\|^2_M$ denoting the Riemannian metric at $M\in \cM$ and $\Delta$ a tangent direction at $M$.
For each pair of points $M_0$, $M_1\in\cM$ consider the path space
\begin{align*}
\Theta_{M_0,M_1}&:=\{M_\tau: [0, 1]\rightarrow \cM : ~M_\tau \text{~is a piecewise smooth} \\
 & \hspace*{25pt}\text{path connecting the two given points}\}.
\end{align*}
Denote by $\dot M_\tau := dM_\tau/d\tau$.
The arc-length
\begin{align*}
\int_0^1 \|\dot M_\tau\|_M d\tau,
\end{align*}
as well as the ``action/energy'' functional
\begin{align*}
\int_0^1  \|\dot M_\tau\|_M^2 d\tau
\end{align*}
attain a minimum at a common path in $\Theta_{f_0,f_1}$. Further, the minimal value of the arclength is the square root of the minimal value of the energy functional, and on a minimizing path the ``speed''
$\|\dot M_\tau\|_M$
remains constant for $\tau\in[0,1]$.
\end{prop}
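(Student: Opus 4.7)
The plan is to prove this via the Cauchy--Schwarz inequality applied to $\|\dot M_\tau\|_M$ and the constant function $1$ on $[0,1]$, together with the reparametrization invariance of the arc-length. Writing $L(M_\tau):=\int_0^1 \|\dot M_\tau\|_M d\tau$ and $E(M_\tau):=\int_0^1 \|\dot M_\tau\|_M^2 d\tau$, Cauchy--Schwarz gives
\[
L(M_\tau)^2 \;\leq\; E(M_\tau)\cdot \int_0^1 1\, d\tau \;=\; E(M_\tau),
\]
with equality if and only if $\|\dot M_\tau\|_M$ is constant almost everywhere on $[0,1]$. This is the single analytic inequality that drives everything.

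Next I would exploit that $L$ is invariant under orientation-preserving, piecewise smooth reparametrizations of $[0,1]$, while $E$ is not. Given any path $M_\tau \in \Theta_{M_0,M_1}$ with positive arc-length, one rescales the parameter by the normalized arc-length function $s(\tau):=L(M_{\cdot}|_{[0,\tau]})/L(M_\tau)$; the reparametrized path $\widetilde M_s \in \Theta_{M_0,M_1}$ has $L(\widetilde M)=L(M)$ and constant speed $\|\dot{\widetilde M}_s\|=L(M)$, so $E(\widetilde M)=L(M)^2$. Combining this with the Cauchy--Schwarz bound yields, for the infima $L_*:=\inf_{\Theta}L$ and $E_*:=\inf_{\Theta}E$, the two-sided inequality $L_*^2 \leq E_* \leq L_*^2$, hence $E_*=L_*^2$.

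It remains to identify the minimizers. If $M_\tau$ attains $E_*$, then $L(M_\tau)^2 \leq E(M_\tau)=E_*=L_*^2$ forces $L(M_\tau)=L_*$, so $M_\tau$ also minimizes $L$; equality in Cauchy--Schwarz then forces $\|\dot M_\tau\|_M$ to be constant on $[0,1]$, giving the constant-speed assertion. Conversely, any $L$-minimizer can be reparametrized to constant speed as above, producing a path in $\Theta_{M_0,M_1}$ with energy $L_*^2=E_*$; so $L$ and $E$ admit a common minimizer, completing the proof.

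The argument is essentially routine once Cauchy--Schwarz and reparametrization invariance are in place; the only subtlety is the degenerate case in which $\|\dot M_\tau\|_M$ vanishes on a set of positive measure, which must be handled so that the reparametrization $s(\tau)$ is a well-defined piecewise smooth bijection. This is the main (mild) obstacle, and it is standard: one restricts attention to paths without ``stationary intervals,'' observing that such intervals can always be removed without changing $L$ and can only decrease $E$, so neither infimum is affected.
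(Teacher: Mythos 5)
Your argument is correct and is the standard length--energy comparison via Cauchy--Schwarz together with arc-length reparametrization; the paper gives no proof of its own, simply citing Petersen (pg.\ 137), where essentially this same argument appears, so you have in effect reproduced the intended proof. The only point worth flagging is that your reasoning (like the cited lemma) establishes the equivalence of $L$- and $E$-minimizers, the relation $E_*=L_*^2$, and the constant-speed property, but does not by itself show that the infima are \emph{attained}; in the paper's application attainment is supplied separately by the explicit geodesic construction of Theorem \ref{Thm:GeodesicPD}, so your treatment is adequate for the use made of the proposition.
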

\begin{proof}
See \cite[pg. 137]{Petersen_Riemannian}.
\end{proof}

\vspace*{.1in}
The insight behind the statement of the proposition is as follows.  The arclength is evidently unaffected by a re-parametrization of a geodesic connecting the two points. The ``energy'' functional on the other hand, is minimized for a specific parametrization of geodesic where the velocity stays constant. Thus, the two are intimately related. The proposition will be applied first to paths between matrices, but in the next section it will also be invoked for geodesics between power spectra.

Herein we are interested in geodesic paths $M_\tau$, $\tau\in[0,1]$, connecting positive definite matrices $M_0$ to $M_1$ and in computing the corresponding geodesic distances
 \[
\dist_{\g} (M_0,M_1)=\int_{0}^{1} \| M_\tau^{-1/2}\frac{dM_\tau}{d\tau}M_\tau^{-1/2}\|_{\rm Fr} d\tau.
 \]
Recall that a geodesic $M_\tau$ is the shortest path on the manifold connecting the beginning to the end.

\begin{thm}\label{Thm:GeodesicPD}
Given Hermitian positive matrices $M_0,M_1$ the geodesic between them with respect to $\g_M$ is unique (modulo re-parametrization) and given by
\begin{equation}\label{eq:Mgeodesic}
M_\tau=M_0^{1/2}(M_0^{-1/2}M_1M_0^{-1/2})^\tau M_0^{1/2},
\end{equation}
for $0\leq \tau \leq 1$. Further, it holds that
\[
\dist_{\g} (M_0, M_\tau)=\tau \dist_{\g}(M_0, M_1),\mbox{ for } \tau \in[0, 1],
\]
and the geodesic distance is
\[
\dist_{\g} (M_0, M_1)=\|\log (M_0^{-1/2}M_1M_0^{-1/2}) \|_{\rm Fr}.
\]
\end{thm}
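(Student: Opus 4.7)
The plan is to exploit the congruence invariance $\g_{M}(\Delta) = \g_{TMT^*}(T\Delta T^*)$ noted just before the theorem in order to reduce the problem to the case $M_0 = I$. Under the change of variable $N_\tau := M_0^{-1/2} M_\tau M_0^{-1/2}$, an arbitrary smooth path from $M_0$ to $M_1$ is pushed to a path from $I$ to $A := M_0^{-1/2} M_1 M_0^{-1/2}$ with exactly the same arclength. Hence it suffices to identify the geodesic from $I$ to $A$ and to show that its length is $\|\log A\|_{\rm Fr}$; the formula in the theorem for general $M_0$ is then recovered by transporting back via $M_\tau = M_0^{1/2} N_\tau M_0^{1/2}$, and the intermediate distance identity $\dist_\g(M_0, M_\tau) = \tau\,\dist_\g(M_0, M_1)$ follows once the speed is shown to be constant (Proposition~\ref{Prop:EquiGeodesic}).

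For the reduced problem, I would propose the candidate $N_\tau := A^\tau = \exp(\tau \log A)$ and verify it by direct computation. Since $A$ commutes with $\log A$,
\[
\dot N_\tau = N_\tau \log A, \qquad N_\tau^{-1/2}\dot N_\tau N_\tau^{-1/2} = A^{\tau/2}(\log A)A^{-\tau/2} = \log A,
\]
which is independent of $\tau$. Thus the speed $\|\log A\|_{\rm Fr}$ is constant, so the arclength integrates to $\|\log A\|_{\rm Fr}$ and the sub-path on $[0,\tau]$ has arclength $\tau\|\log A\|_{\rm Fr}$. The invariance of $\g$ under congruence then yields the formula for the geodesic and for $\dist_\g(M_0,M_1)$ in the theorem, provided we know that $A^\tau$ is indeed a (unique) length minimizer among all paths from $I$ to $A$.

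The main obstacle is precisely this global minimality-and-uniqueness step. The constant-speed computation above only verifies that $A^\tau$ is a reasonable candidate and, via the standard Euler--Lagrange calculation for the energy $\int_0^1 \trace\bigl(N^{-1}\dot N N^{-1}\dot N\bigr)\,d\tau$, that it solves the geodesic equation $\ddot N - \dot N N^{-1}\dot N = 0$. To promote this to a global minimizer I see two viable routes. The clean route is to cite \cite[Chapter~6]{Bhatia_positive}: the manifold of positive definite matrices endowed with $\g_M$ is a Cartan--Hadamard manifold (complete, simply connected, of nonpositive sectional curvature), so the exponential map at any point is a diffeomorphism onto $\cM$ and each pair of points is joined by a unique minimizing geodesic, which can then be matched to $A^\tau$ by its initial velocity $\log A$ at $\tau = 0$. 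The more hands-on route, which I would include as verification, is to take any smooth path $N_\tau$ from $I$ to $A$, use first-order eigenvalue perturbation to obtain the pointwise bound
\[
\|N_\tau^{-1/2}\dot N_\tau N_\tau^{-1/2}\|_{\rm Fr}^2 \;\geq\; \sum_{i=1}^m \left(\frac{\dot\lambda_i(\tau)}{\lambda_i(\tau)}\right)^2,
\]
where $\lambda_i(\tau)$ are the (smoothly chosen) eigenvalues of $N_\tau$, then integrate and apply Cauchy--Schwarz to each $\int_0^1 \dot\lambda_i/\lambda_i\,d\tau = \log\lambda_i(A)$. The resulting lower bound $\|\log A\|_{\rm Fr}$ is attained exactly when the eigenvectors of $N_\tau$ are $\tau$-independent and each $\dot\lambda_i/\lambda_i$ is constant, i.e., only when $N_\tau = A^\tau$ up to reparametrization, yielding both minimality and uniqueness.
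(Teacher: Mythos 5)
Your outline is correct, but it takes a genuinely different route from the paper. The paper makes the same two opening moves---congruence reduction to the pair $(I,A)$ with $A=M_0^{-1/2}M_1M_0^{-1/2}$, and passage from arclength to the energy functional via Proposition~\ref{Prop:EquiGeodesic}---but then treats the energy minimization as an optimal control problem and invokes Pontryagin's minimum principle: the stationarity and co-state equations force the product $X_\tau\Lambda_\tau$ to be constant, whence $\dot X_\tau = C X_\tau$, $X_\tau=e^{C\tau}$, $C=\log A$, and the cost evaluates to $\tau\|\log A\|_{\rm Fr}^2$; the geodesic formula is thus \emph{derived} rather than guessed, with global optimality and uniqueness ultimately resting on the cited result \cite[Theorem 6.1.6]{Bhatia_positive}. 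You instead verify that the guessed path $A^\tau$ has constant speed $\|\log A\|_{\rm Fr}$ and then attack global minimality head-on, either by citing the Cartan--Hadamard structure of the manifold or by the pointwise bound $\|N_\tau^{-1/2}\dot N_\tau N_\tau^{-1/2}\|_{\rm Fr}^2\geq\sum_i(\dot\lambda_i/\lambda_i)^2$ followed by Cauchy--Schwarz; your second route is in fact the stronger piece, since it supplies exactly what a first-order (Pontryagin or Euler--Lagrange) computation alone cannot, namely that the stationary path is a \emph{global} minimizer and is unique, and it is essentially the infinitesimal form of the exponential metric increasing inequality \eqref{eq:loginequ} quoted in the paper's remark (with $N_0=I$ the lower bound $\|\log N_1-\log N_0\|_{\rm Fr}$ is attained, which is why the argument closes). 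To make that route airtight you should note three small points: a competitor path is only piecewise smooth, so the eigenvalue branches $\lambda_i(\tau)$ must be chosen differentiably (Rellich's theorem for one-parameter Hermitian families, with the perturbation identity $\dot\lambda_i$ equal to the corresponding diagonal entry of $\dot N$ in the eigenbasis holding a.e., including at crossings); the Cauchy--Schwarz step is cleanest at the energy level, $\int_0^1(\dot\lambda_i/\lambda_i)^2\,d\tau\geq\bigl(\int_0^1\dot\lambda_i/\lambda_i\,d\tau\bigr)^2=\log^2\lambda_i(A)$, or should be replaced by the vector-valued triangle inequality if you work with arclength; and in the equality analysis it is the vanishing of the off-diagonal block of $\dot N$ in the eigenbasis of $N$ that forces the eigenvectors to be $\tau$-independent, after which constancy of $\dot\lambda_i/\lambda_i$ pins down $N_\tau=A^\tau$ up to reparametrization. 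None of these caveats affects the validity of your plan.
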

\vspace*{.1in}

\begin{proof} A proof is given in  \cite[Theorem 6.1.6, pg.\ 205]{Bhatia_positive}.  However, since this is an important result for our purposes and for completeness, we provide an independent short proof relying on Pontryagin's minimum principle.

We first note that, since $\g_M$ is congruence invariant, the path $TM_\tau T^*$ is a geodesic between $TM_0 T^*$ and $TM_1 T^*$, for any invertible matrix $T$. Further, the geodesic length is independent of $T$. Thus, we set
\[
T=M_0^{-\frac12},
\]
and seek a geodesic path between
\begin{equation}\label{eq:boundary}
X_0=I \mbox{ and }X_1=M_0^{-\frac12} M_1 M_0^{-\frac12}.
\end{equation}
Appealing to Proposition \ref{Prop:EquiGeodesic} we seek
\begin{eqnarray}\label{eq:optimalcontrol}
&&\min\{ \int_0^1\trace(X_\tau^{-1}U_\tau X_\tau^{-1}U_\tau)d\tau,\\
&&\hspace*{20pt}\mbox{ subject to } \dot X_\tau=U_\tau, \mbox{ and } X_0,X_1\mbox{ specified}\}.\nonumber
\end{eqnarray}
Now, \eqref{eq:optimalcontrol} is a standard optimal control problem. The value of the optimal control must
annihilate the variation of the Hamiltonian with respect to the ``control'' $U_\tau$
\[
\trace(X_\tau^{-1}U_\tau X_\tau^{-1}U_\tau) + \trace(\Lambda_\tau U_\tau).
\]
Here, $\Lambda_\tau$ represents the co-state (i.e., Lagrange multiplier functions). The variation is
\[
\trace(2X_\tau^{-1}U_\tau X_\tau^{-1}\delta_U + \Lambda_\tau \delta_U)
\]
and this being identically zero for all $\delta_U$  implies that
\begin{equation}\label{Uopt}
U_\tau = -\frac12 X_\tau \Lambda_\tau X_\tau.
\end{equation}
Similarly, the co-state equation is obtained by considering the variation with respect to $X$. This gives
\[
\dot \Lambda_\tau = 2 X_\tau^{-1}U_\tau X_\tau^{-1}U_\tau X_\tau^{-1}.
\]
Substitute the expression for $U_\tau$ into the state and the co-state equations to obtain
\begin{eqnarray*}
\dot X_\tau &=& -\frac12 X_\tau \Lambda_\tau X_\tau\\
\dot \Lambda_\tau &=& \phantom{-}\frac12 \Lambda_\tau X_\tau \Lambda_\tau.
\end{eqnarray*}
Note that
\[
\dot X_\tau \Lambda_\tau+ X_\tau \dot \Lambda_\tau =0,
\]
identically, for all $\tau$. Hence, the product $X_\tau\Lambda_\tau$ is constant. Set
\begin{equation}\label{XLambda}
X_\tau\Lambda_\tau = - 2C.
\end{equation}
The state equation becomes
\[
\dot X_\tau = CX_\tau.
\]
The solution with initial condition $X_0=I$ is
\[
X_\tau= \exp(C\tau).
\]
Matching \eqref{eq:boundary} requires that $\exp(C)=X_1=M_0^{-\frac12} M_1 M_0^{-\frac12}$.
Thus, $X_\tau=(M_0^{-\frac12} M_1 M_0^{-\frac12})^\tau$ and the geodesic is as claimed.
Further,
\[
C=\log(M_0^{-\frac12} M_1 M_0^{-\frac12})
\]
while $U_\tau=CX_\tau$ from \eqref{XLambda} and \eqref{Uopt}.
So finally, for the minimizing choice of $U_\tau$ we get that the cost
\begin{eqnarray*}
\int_0^\tau\trace(X_\tau^{-1}U_\tau X_\tau^{-1}U_\tau)d\tau &=& \int_0^\tau\trace(C^2)d\tau\\
&=& \tau\|\log (M_0^{-1/2}M_1M_0^{-1/2}) \|_{\rm Fr}^2
\end{eqnarray*}
as claimed.\end{proof}

\begin{remark} It's important to point out the lower bound
\begin{align}\label{eq:loginequ}
\dist_{\g}(M_0, M_1)\geq \|\log M_0-\log M_1 \|_{\rm Fr}
\end{align}
on the geodesic distance which holds with equality when $M_0$ and $M_1$ commute. 
This is known as the exponential metric increasing property \cite[page 203]{Bhatia_positive} and will be used later on.
$\Box$
\end{remark}

The mid point of the geodesic path in \eqref{eq:Mgeodesic} is what is known as the geometric mean of the two matrices $M_0$ and $M_1$. This is commonly denoted by
\[
M_{\frac12}:=M_0\sharp M_1.
\]
Similar notation, with the addition of a subscript $\tau$, will be used to designate the complete geodesic path
\[
 M_\tau=M_0\sharp_\tau M_1:=M_0^{1/2}(M_0^{-1/2}M_1M_0^{-1/2})^\tau M_0^{1/2}
 \]
(see \cite{Bhatia_positive}).
A number of useful properties can be easily verified:\\
i) Congruence invariance: for any invertible matrix $T$,
\[
\dist_{\g}(M_0, M_1)=\dist_{\g}(T M_0 T^*, T M_1 T^*).
\]
ii) Inverse invariance:
\[
\dist_{\g}(M_0, M_1)=\dist_{\g}(M_0^{-1}, M_1^{-1}).
\]
iii) The metric satisfies the semiparallelogram law.\\
iv) The space of positive definite matrices metrized by $\dist_{\g}$ is complete; that is, any Cauchy sequence of positive definite matrices converges to a positive definite matrix.\\
v) Given any three ``points'' $M_0$, $M_1$, $M_2$,
\[
\dist_{\g}(M_0\sharp_\tau M_1, M_0\sharp_\tau M_2)\leq \tau \dist_{\g}(M_1, M_2),
\]
which implies that geodesics diverge at least as fast as ``Euclidean geodesics''.

\begin{remark}
Property v) implies that the Riemannian manifold of positive definite matrices with metric $\dist_\g$ has nonpositive sectional curvature \cite[pg. 39--40]{Jost_nonpositive}. 
The nonpositive sectional curvature of a simply connected complete Riemannian manifold has several important geometric consequences. It implies the existence and uniqueness of a geodesic connecting any two points on the manifold \cite[pg. 3--4]{Jost_nonpositive}. Convex sets on such a manifold are defined by the requirement that geodesics between any two points in the set lie entirely in the set \cite[pg. 67]{Jost_nonpositive}. Then, ``projections'' onto the set exist in that there is always a closest point within convex set to any given point. Evidently, such a property should be valuable in applications, such as  speaker identification or speech recognition based on a database of speech segments; e.g., models may be taken as the ``convex hull'' of prior sample spectra and the metric distance of a new sample compared to how far it resides from a given such convex set.
Another property of such a manifold is that the center of mass of a set of points is contained in the closure of its convex hull \cite[pg. 68]{Jost_nonpositive}; this property has been used to define the geometric means of symmetric positive matrices in \cite{Moakher_differential}. $\Box$
\end{remark}

\section{Geodesics and geodesic distances}\label{sec:distances}

Power spectral densities are families of Hermitian matrices parametrized by the frequency $\theta$, and as such, can be thought of as positive operators on a Hilbert space. 
Geometries for positive operators have been extensively studied for some time now, and power spectral densities may in principle be studied with similar tools. However, what it may be somewhat surprising is that the geometries obtained earlier, based on the innovations flatness and optimal prediction, have points of contact with this literature. This was seen in the correspondence between the metrics that we derived.

In the earlier sections we introduced two metrics, $\g_1$ and $\g_2$.
Although there is a close connection between the two, as suggested by \eqref{g1a}, it is only for the former that we are able to identify
geodesics and compute the geodesic lengths, based on the material in Section \ref{sec:geometryonmatrices}.
We do this next.

\begin{thm}
There exists a unique geodesic path $f_\tau$ with respect to $\g_{1,f}$, connecting any two spectra $f_0$, $f_1\in \cF$. The geodesic path 
is
\begin{align}\label{eq:geodesic2}
f_\tau=f_0^{1/2}(f_0^{-1/2}f_1f_0^{-1/2})^\tau f_0^{1/2},
\end{align}
for $0\leq \tau \leq 1$. The geodesic distance is
\[
\dist_{\g_1}(f_0, f_1)=\sqrt{\intpi \|\log f_0^{-1/2}f_1f_0^{-1/2} \|_{\rm Fr}^2 \dtheta}.
\]
\end{thm}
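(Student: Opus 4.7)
The plan is to observe that the Riemannian metric $\g_{1,f}$, written in the form \eqref{MetricDef1}, is pointwise in $\theta$: at each frequency it coincides exactly with the Fisher--Rao metric $\g_M$ on positive definite matrices studied in Section \ref{sec:geometryonmatrices}. Consequently the spectral geodesic problem decouples into one matrix geodesic problem per frequency, each of which is solved by Theorem \ref{Thm:GeodesicPD}.

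Concretely, I would first invoke Proposition \ref{Prop:EquiGeodesic} to replace arclength minimization by minimization of the energy functional
\begin{align*}
E(f_\cdot) := \int_0^1 \g_{1,f_\tau}(\dot f_\tau)\, d\tau
\end{align*}
over piecewise smooth paths $\tau\mapsto f_\tau\in\cF$ with endpoints $f_0, f_1$. Applying Fubini to swap the order of the $\tau$ and $\theta$ integrals gives
\begin{align*}
E(f_\cdot) = \intpi \Bigl( \int_0^1 \|f_\tau(\theta)^{-1/2}\dot f_\tau(\theta) f_\tau(\theta)^{-1/2}\|_{\rm Fr}^2 \, d\tau \Bigr) \dtheta.
\end{align*}
For each fixed $\theta$ the inner integral is precisely the matrix action functional from Theorem \ref{Thm:GeodesicPD} between the positive definite matrices $f_0(\theta)$ and $f_1(\theta)$, so it is bounded below by $\|\log f_0(\theta)^{-1/2}f_1(\theta) f_0(\theta)^{-1/2}\|_{\rm Fr}^2$, with the unique constant-speed minimizer given by the matrix formula \eqref{eq:Mgeodesic}. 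Integrating this pointwise bound over $\theta$ yields
\begin{align*}
E(f_\cdot) \geq \intpi \|\log f_0^{-1/2} f_1 f_0^{-1/2}\|_{\rm Fr}^2 \dtheta.
\end{align*}

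Next I would verify that the candidate path \eqref{eq:geodesic2} is admissible and attains equality in the lower bound. Equality is automatic since, at each $\theta$, the candidate restricts to the matrix geodesic supplied by Theorem \ref{Thm:GeodesicPD}. Admissibility reduces to showing that $f_\tau\in\cF$ and that $\tau\mapsto f_\tau$ is (piecewise) smooth: since $f_0, f_1\in\cF$ are continuously differentiable in $\theta$ and uniformly bounded above and below on the circle, the holomorphic functional calculus ensures that $f_0^{1/2}(f_0^{-1/2}f_1 f_0^{-1/2})^\tau f_0^{1/2}$ is continuously differentiable in $\theta$ for each $\tau\in[0,1]$ and smooth in $\tau$. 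Uniqueness modulo reparametrization then follows from the pointwise uniqueness of the matrix geodesic: any other minimizer must coincide with the prescribed path for almost every $\theta$, which by continuity forces agreement everywhere. Taking the square root of $E$ at the minimizer gives the claimed geodesic distance formula.

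The main obstacle I expect is the admissibility/regularity verification, i.e., confirming that the pointwise-optimal family in fact defines an element of $\cF$ for every intermediate $\tau$ and depends smoothly on $\tau$. Once that smoothness is in hand the argument is a clean pointwise reduction to Theorem \ref{Thm:GeodesicPD}, and the Fubini exchange becomes automatic because the integrand is nonnegative.
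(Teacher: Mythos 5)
Your proposal is correct and follows essentially the same route as the paper's own proof: reduce arclength to the energy functional via Proposition \ref{Prop:EquiGeodesic}, swap the $\tau$ and $\theta$ integrals, minimize pointwise in $\theta$ via Theorem \ref{Thm:GeodesicPD}, and verify that the candidate path stays in $\cF$ using the smoothness of matrix inversion and fractional powers. The only difference is that you spell out the pointwise lower bound and the uniqueness argument in slightly more detail than the paper does.
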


\vspace*{.1in}
\begin{proof}
As before,  in view of Proposition \ref{Prop:EquiGeodesic}, instead of the geodesic length we may equivalently consider minimizing the energy/action functional
\begin{align*}
{\rm E}&=\int_0^1 \intpi \|f_\tau^{-1/2}\dot{f}_\tau f_\tau^{-1/2}\|_{\rm Fr}^2 \dtheta d\tau\\
&= \intpi \int_0^1\|f_\tau^{-1/2}\dot{f}_\tau f_\tau^{-1/2}\|_{\rm Fr}^2  d\tau \dtheta.
\end{align*}
Clearly, this can be minimized point-wise in $\theta$ invoking Theorem \ref{Thm:GeodesicPD}.
Now, inversion as well as the fractional power of symmetric (strictly) positive matrices represent continuous and differentiable maps.
Hence, it can be easily seen that, because $f_0,f_1$ are in $\cF$ so is
\[
f_\tau=f_0^{1/2}(f_0^{-1/2}f_1f_0^{-1/2})^\tau f_0^{1/2}.
\]
Therefore, this path is the sought minimizer of
\[
\int_0^1\|f_\tau^{-1/2}\dot{f}_\tau f_\tau^{-1/2}\|_{\rm Fr}^2  d\tau
\]
and the geodesic length is as claimed.
\end{proof}

\begin{cor}
Given any  $f_0$, $f_1$, $f_2\in \cF$, the function $\dist_{\g_1}(f_0\sharp_\tau f_1, f_0\sharp_\tau f_2)$ is convex on $\tau$.
\end{cor}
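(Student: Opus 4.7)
The plan is to reduce the statement to a pointwise-in-$\theta$ problem on the Hadamard manifold of positive definite matrices, and then to lift the result back to the space of PSDs by Minkowski's inequality in $L^2$. As a starting point, note that the geodesic formula \eqref{eq:geodesic2} is pointwise in $\theta$, so $(f_0 \sharp_\tau f_i)(\theta) = f_0(\theta) \sharp_\tau f_i(\theta)$, and hence
\[
\dist_{\g_1}(f_0 \sharp_\tau f_1,\; f_0 \sharp_\tau f_2) \;=\; \left( \intpi \phi_\theta(\tau)^2 \dtheta \right)^{1/2},
\]
where $\phi_\theta(\tau) := \dist_\g\bigl(f_0(\theta) \sharp_\tau f_1(\theta),\; f_0(\theta) \sharp_\tau f_2(\theta)\bigr)$. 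Thus it suffices to understand the behavior of $\phi_\theta(\tau)$ for each fixed $\theta$.

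First, I would establish pointwise convexity: for each $\theta$, $\phi_\theta$ is convex on $[0,1]$. The underlying manifold of $m\times m$ positive definite Hermitian matrices endowed with $\g_M$ is a Hadamard manifold: it is simply connected, being diffeomorphic to $\mR^{m^2}$ via the matrix logarithm; it is metrically complete by property iv of the previous section; and it has nonpositive sectional curvature as noted in the remark after Theorem \ref{Thm:GeodesicPD} (equivalently, the semiparallelogram law iii holds). A classical consequence of Hadamard's theorem is the CAT$(0)$ inequality: for any two constant-speed geodesics $\alpha,\beta: [0,1] \to \cM$ in such a manifold, the function $u \mapsto \dist_\g(\alpha(u),\beta(u))$ is convex (see, e.g., \cite[Ch.\ 2]{Jost_nonpositive}). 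I would apply this to the subgeodesic pair
\[
\alpha(u) := f_0(\theta) \sharp_{(1-u)\tau_1 + u\tau_2} f_1(\theta), \quad \beta(u) := f_0(\theta) \sharp_{(1-u)\tau_1 + u\tau_2} f_2(\theta),
\]
for arbitrary $\tau_1, \tau_2 \in [0,1]$; evaluating at $u = s$ yields $\phi_\theta((1-s)\tau_1 + s\tau_2) \leq (1-s)\phi_\theta(\tau_1) + s\phi_\theta(\tau_2)$.

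Then I would lift to the PSD setting by integration. Squaring the pointwise bound before integrating is not directly useful since $x\mapsto \sqrt{x}$ is concave; instead, I would apply Minkowski's inequality in $L^2([-\pi,\pi], d\theta/(2\pi))$ directly to the pointwise convexity inequality to obtain
\[
\|\phi_\cdot(\tau)\|_{L^2} \;\leq\; (1-s)\,\|\phi_\cdot(\tau_1)\|_{L^2} + s\,\|\phi_\cdot(\tau_2)\|_{L^2},
\]
which, after identifying $\|\phi_\cdot(\tau)\|_{L^2}$ with $\dist_{\g_1}(f_0 \sharp_\tau f_1, f_0 \sharp_\tau f_2)$, is exactly the claim.

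The main obstacle is the pointwise convexity step. Property v) of the preceding section only controls $\phi_\theta$ against the linear interpolant anchored at $\phi_\theta(0)=0$ and $\phi_\theta(1)$ and does not by itself give convexity throughout $[0,1]$. What is really needed is the full CAT$(0)$ inequality for geodesic segments whose basepoints differ, and this is available precisely because the matrix manifold is Hadamard (simply connected, complete, and of nonpositive sectional curvature), a setting in which Hadamard's theorem promotes the local CAT$(0)$ condition to a global one.
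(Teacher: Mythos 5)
Your proposal is correct and follows essentially the same route as the paper, whose one-line proof simply invokes the convexity of the matrix-manifold distance $\dist_{\g}(\cdot,\cdot)$ along geodesics (the nonpositive-curvature/CAT$(0)$ property) that you establish pointwise in $\theta$. Your write-up merely makes explicit the two steps the paper leaves implicit---the pointwise joint-convexity of $\dist_\g$ along the two subgeodesics and the lift to $\dist_{\g_1}$ via Minkowski's inequality in $L^2$---and both steps are carried out correctly.
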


\vspace*{.1in}
\begin{proof}
The proof is a direct consequence of the convexity of the metric $\dist_g(\cdot, \cdot)$.\end{proof}

The importance of the statement in the corollary is that the metric space has nonpositive curvature.
Other properties are similarly inherited. For instance, $\dist_{\g_1}$ satisfies the semi-parallelogram law.

Next we explain that the closure of the space of positive differentiable power spectra, under $\g_1$, is simply power spectra that are squarely log integrable. This is not much of a surprise in view of the metric and the form of the geodesic distance. Thus, the next proposition shows that the completion, denoted by ``bar,'' is in fact

\begin{eqnarray}\nonumber
\bar{\cF}&:= &\{f\;\mid \mbox{$m\times m$ positive definite a.e.,}\\
&&\phantom{x}\mbox{on }[-\pi,\pi], ~\log{f}\in L_2[-\pi, \pi]\}. \label{eq:closure}
\end{eqnarray}
It should be noted that the metric $\dist_{\g_1}$ is not equivalent to an $L_2$-based metric
$\|\log(f_1)-\log(f_2)\|_2$ for the space. Here,
\[ \| h\|_2:=\sqrt{\intpi \|h\|_{\rm Fr}^2\dtheta}.\]
In fact, using the latter $\bar{\cF}$ has zero curvature while, using $\dist_{g_1}$, $\bar{\cF}$ becomes a space with non-positive (non-trivial) curvature.

\begin{prop}\label{prop:GeodesicComplete}
The completion of $\cF$ under $\dist_{\g_1}$ is as indicated in \eqref{eq:closure}.
\end{prop}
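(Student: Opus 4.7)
The plan is to extend $\dist_{\g_1}$ to $\bar{\cF}$ by keeping the same formula
\[
\dist_{\g_1}(f_0,f_1)^2 = \intpi \|\log(f_0^{-\frac12}f_1 f_0^{-\frac12})\|_{\rm Fr}^2\dtheta,
\]
and then to check (a) the extension is a well-defined finite metric, (b) $\cF$ is dense in $\bar{\cF}$ with respect to it, and (c) $\bar{\cF}$ is complete. Together these identify $\bar{\cF}$ with the abstract metric completion of $(\cF,\dist_{\g_1})$. For finiteness, the triangle inequality for $\dist_\g$ on positive matrices together with $\dist_\g(I,M)=\|\log M\|_{\rm Fr}$ gives pointwise $\dist_\g(f_0(\theta),f_1(\theta))\le \|\log f_0(\theta)\|_{\rm Fr}+\|\log f_1(\theta)\|_{\rm Fr}$, and Minkowski in $L_2$ yields $\dist_{\g_1}(f_0,f_1)\le \|\log f_0\|_2+\|\log f_1\|_2<\infty$. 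The metric axioms transfer from the pointwise Riemannian ones on positive matrices.

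For density, given $f\in\bar{\cF}$ with $h:=\log f\in L_2$, I would first spectrally truncate: let $h_n(\theta)$ be $h(\theta)$ with eigenvalues clipped to $[-n,n]$, so $\|h_n(\theta)\|_{\rm Fr}\le \|h(\theta)\|_{\rm Fr}$ and $h_n\to h$ pointwise a.e. Then mollify $h_n$ on the circle with a smooth even bump of width $\epsilon_n$ to obtain a $C^\infty$ Hermitian $\tilde h_n$ still bounded by $n$ in operator norm, and set $f_n:=\exp(\tilde h_n)\in \cF$. Convergence $\dist_{\g_1}(\exp(h_n),f)\to 0$ follows from dominated convergence using $\dist_\g(e^{h_n(\theta)},f(\theta))\to 0$ a.e.\ with $L_2$-integrable envelope $2\|h(\theta)\|_{\rm Fr}$. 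For each $n$, since both $e^{h_n}$ and $e^{\tilde h_n}$ lie in $[e^{-n},e^n]I$, a second dominated-convergence argument as $\epsilon_n\to 0$ lets me arrange $\dist_{\g_1}(f_n,\exp(h_n))<1/n$, and the triangle inequality closes the gap. This is the main technical point: mollification must preserve positive definiteness (handled by mollifying the Hermitian log rather than $f$ itself), and the metric deformation must be controlled, which is why truncating to reach $L_\infty$ first is essential.

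For completeness, let $\{f_n\}\subset\bar{\cF}$ be Cauchy. The exponential metric increasing property \eqref{eq:loginequ} applied under the integral gives $\|\log f_n-\log f_m\|_2\le \dist_{\g_1}(f_n,f_m)$, so $\{\log f_n\}$ is Cauchy in $L_2$ and converges to some $h\in L_2$; a subsequence $n_k$ has $\log f_{n_k}\to h$ pointwise a.e., so $f_{n_k}\to f:=\exp(h)$ a.e., with $f\in\bar{\cF}$. For fixed $n$, Fatou's lemma applied to the integrand of $\dist_{\g_1}(f_n,f_{n_k})^2$, together with continuity of $\dist_\g$ on positive matrices, gives
\[
\dist_{\g_1}(f_n,f)^2\le \liminf_{k\to\infty}\dist_{\g_1}(f_n,f_{n_k})^2,
\]
which tends to $0$ as $n\to\infty$ by the Cauchy property. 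Combining density and completeness finishes the identification of $\bar{\cF}$ with the completion.
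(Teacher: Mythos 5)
Your proof is correct and follows the same overall skeleton as the paper's: extend the geodesic-distance formula to $\bar{\cF}$, show $\cF$ is dense, and prove completeness by using the exponential-metric-increasing inequality \eqref{eq:loginequ} to transfer the Cauchy property to $\{\log f_n\}$ in $L_2$ (your preliminary finiteness step, via the pointwise triangle inequality through $I$ and Minkowski, plays the role of the paper's first step, which bounds $\|\log f\|_2$ by a triangle inequality through $I$ as well). The genuine difference is in how the two limit interchanges are handled. For density, the paper approximates $h=\log f$ by $C^1$ functions dense in $L_2$, extracts an a.e.-convergent subsequence, and concludes from a.e. convergence of the eigenvalues $\lambda_i(e^{-h}e^{h_{n_k}})$ that $\dist_{\g_1}(e^{h_{n_k}},f)\to 0$; that final passage from a.e. convergence of the integrand to convergence of the integral tacitly requires a domination or uniform-integrability argument (EMI only bounds $\dist_\g$ from \emph{below} by $\|h_{n_k}-h\|_{\rm Fr}$, so $L_2$ convergence of the logs does not by itself control the metric integrand). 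Your two-stage construction supplies exactly this control: clipping the eigenvalues of $h$ makes $h_n$ and $h$ commute, giving the identity $\dist_\g(e^{h_n(\theta)},f(\theta))=\|h_n(\theta)-h(\theta)\|_{\rm Fr}\le\|h(\theta)\|_{\rm Fr}$ and hence an $L_2$ envelope for dominated convergence, and mollifying the bounded Hermitian log keeps the integrand uniformly bounded by $(2n\sqrt{m})^2$ so a second dominated-convergence step closes the gap, at the price of a slightly longer construction. Likewise, for completeness the paper re-runs the subsequence argument, whereas your Fatou estimate $\dist_{\g_1}(f_n,f)^2\le\liminf_k\dist_{\g_1}(f_n,f_{n_k})^2$ combined with the Cauchy property is cleaner and needs no domination at all. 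Both routes reach the same conclusion; yours is more careful precisely where the paper is terse, while the paper's is shorter by leaning on the eigenvalue convergence.
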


\vspace*{.1in}
\begin{proof} Clearly, for $f\in\cF$, $\log f\in L_2[-\pi,\pi]$ since $f$ is continuous on the closed interval and positive definite. Further, the logarithm maps positive differentiable matrix-functions to positive differentiable ones, bijectively.
Our proof of $\bar{\cF}$ being the completion of $\cF$ is carried out in three steps. First we will show that the limit of every Cauchy sequence in $\cF$ belongs to $\bar{\cF}$. Next we argue that every point in $\bar{\cF}$ is the limit of a sequence in $\cF$, which together with the first step shows that $\cF$ is dense in $\bar{\cF}$. Finally, we need to show that $\bar{\cF}$ is complete with $\dist_{\g_1}$.

First, consider a Cauchy sequence $\{f_n\}$ in $\cF$ which converges to $f$. Hence, there exists an $N$, such that for any $k\geq N$, $\dist_{\g_1}(f_k, f)<1$. Using the triangular inequality for $\dist_{\g_1}$, we have that
\[
\dist_{\g_1}(I, f)\leq \dist_{\g_1}(I, f_N)+\dist_{\g_1}(f_N, f),
\]
or, equivalently,
\[
\| \log{f} \|_2 < \| \log{f_N} \|_2+1.
\]
Since $ \| \log{f_N} \|_2$ is finite, $f \in \bar{\cF}$.  

Next, for any point $f$ in $\bar{\cF}$ which is not continuous, we show that it is the limit of a sequence in $\cF$. Let $h=\log f$, then $h\in L_2[-\pi, \pi]$. Since the set of differentiable functions $C^1[-\pi, \pi]$ is dense in $L_2[-\pi, \pi]$, there exits a sequence $\{h_n\in C^1[-\pi, \pi]\}$ which converges to $h$ in the $L_2$ norm. Using Theorem 3 in \cite[pg. 86]{Kolmogorov_FunctionsV2}, there exists a subsequence $\{h_{n_k}\}$ which converges to $h$ almost everywhere in $[-\pi, \pi]$, i.e.,
\[
\| h_{n_k}(\theta)-h(\theta)||_{\rm Fr}\rightarrow 0~~\text{a.e.,~~as~~} n_k\rightarrow \infty.
\]
Since the exponential map is continuous \cite[pg. 430]{Horn_Topics}, $\| e^{h_{n_k}(\theta)}-e^h(\theta)||_{\rm Fr}$ converges to $0$ almost everywhere as well. Using the sub-multiplicative property of the Frobenius norm, we have that
\[
\| I-e^{-h(\theta)}e^{h_{n_k}(\theta)}\|_{\rm Fr}\leq \|e^{-h(\theta)}\|_{\rm Fr} \| e^{h_{n_k}(\theta)}-e^h(\theta)\|_{\rm Fr},
\]
where the right side of the above inequality goes to zero. Thus the spectral radius of $( I-e^{-h(\theta)}e^{h_{n_k}(\theta)})$ goes to zero \cite[pg. 297]{Horn_MatrixAnalysis}. Hence,
all the eigenvalues $\lambda_i(e^{-h(\theta)}e^{h_{n_k}(\theta)})$, $1\leq i \leq m$, converge to $1$ as $k\to \infty$. Then, $f_{n_k}=e^{h_{n_k}}\in\cF$ and
\begin{align*}
\dist_{\g_1}(f_{n_k}, f)&=\sqrt{\intpi \|\log f^{-1/2}f_{n_k} f^{-1/2} \|_{\rm Fr}^2 \dtheta} \\
&=\sqrt{\intpi \sum_{i=1}^m \log ^2 \lambda_i(f^{-1}f_{n_k}) \dtheta} \\
&=\sqrt{\intpi \sum_{i=1}^m \log ^2 \lambda_i(e^{-h}e^{h_{n_k}}) \dtheta}.
\end{align*}
Since $\log \lambda_i(e^{-h}e^{h_{n_k}})\to 0$ a.e., for $1\leq i \leq m$, $\dist_{\g_1}(f_{n_k}, f)\to 0$ as well. Therefore, $f$ is the limit of $\{ f_{n_k}\}$.

Finally we show that $\bar{\cF}$ is complete under $\dist_{\g_1}$.
Let $\{f_n\}$ be a Cauchy sequence in $(\bar{\cF}, \dist_{\g_1})$, and let $h_n=\log f_n$.
Using the inequality (\ref{eq:loginequ}), we have
\[
\dist_{\g_1}(f_k, f_l)\geq \sqrt{\intpi \|h_k-h_l\|_{\rm Fr}^2\dtheta}.
\]
Thus $\{h_n\}$ is also a Cauchy sequence in $L_2[-\pi, \pi]$, which is a complete metric space. As a result, $\{h_n\}$ converges to a point $h$ in $L_2[-\pi, \pi]$. Following the similar procedure as in the previous step, there exists a subsequence $\{ f_{n_k}\}$ which converges to $f=e^h\in \bar{\cF}$. This completes our proof.
\end{proof}

\begin{remark} Geodesics of $\g_{2,f}$ for scalar power spectra were constructed in \cite{Georgiou_distance}. At the present time, a multivariable generalization appears to
be a daunting task. The main obstacle is of course non-commutativity of matricial density functions and the absence of an integral representation of analytic spectral factors in terms of matrix-valued power spectral densities.
In this direction we point out that some of the needed tools are in place. For instance, a square matrix-valued function which is analytic and non-singular in the unit disc $\mD$, admits a logarithm which is also analytic in $\mD$. To see this, consider such a matrix-function, say $f_+(z)$. The matrix logarithm is well defined locally in a neighborhood of any $z_0\in \mD$ via the Cauchy integral
\begin{equation}\nonumber
g(z) =\frac{1}{2\pi i } \int_{L_{z_0}} \ln(\zeta) (\zeta I-f_+(z))^{-1}d\zeta.
\end{equation}
Here, $L_{z_0}$ is a closed path in the complex plane that encompasses all of the eigenvalues of $f_+(z_0)$ and does not separate the origin from the point at $\infty$. The Cauchy integral gives a matrix-function $g(z)$ which is analytic in a sufficiently small neighborhood of $z_0$ in the unit disc $\mathbb D$ ---the size of the neighborhood being dictated by the requirement that the eigenvalues stay within $L_{z_0}$, and $\exp(g(z))=f_+(z)$. To define the logarithm consistently over $\mD$ we need to ensure that we always take the same principle value. This is indeed the case if we extend $g(z)$ via analytic continuation: since $f_+(z)$ is not singular anywhere in $\mD$ and the unit disc is simply connected, the values for $g(z)$ will be consistent, i.e., any path from $z_0$ to an arbitrary $z\in\mD$ will lead to the same value for $g(z)$. Thus, one can set $\log(f_+)=g$ and understand this to be a particular version of the logarithm.
Similarly, powers of $f_+$ can also be defined using Cauchy integrals,
\begin{equation}\nonumber
\frac{1}{2\pi i } \int_{L_{z_0}} \zeta^\tau (\zeta I-f_+(z))^{-1}d\zeta
\end{equation}
for $\tau\in[0,1]$, first in a neighborhood of a given $z_0\in\mD$, and then by analytic continuation to the whole of $\mD$. As with the logarithm, there may be several versions. Geodesics for $\g_{2,f}$ appear to be require paths in the space of cannonical spectral factors for the corresponding matricial densities, such as
$f_{\tau +}=f_{0+}(f_{0+}^{-1}f_{1+})_{+}^\tau$. However, the correct expression remains elusive at present. $\Box$
\end{remark}

\section{Examples}\label{sec:example}
We first demonstrate geodesics connecting two power spectral densities that correspond to all-pole models, i.e., two autoregressive (AR) spectra.
The geodesic path between them does not consist of AR-spectra, and it can be considered as a non-parametric model for the transition. The choice of AR-spectra for the end points
is only for convenience. As discussed earlier, the aim of the theory is to serve as a tool in non-parametric estimation, path following, morphing, etc., in the spectral domain.

\subsection*{A scalar example:}
Consider the two power spectral denisities
\[f_i(\theta)=\frac{1}{|a_i(e^{\jj\theta})|^2}, \;i\in \{0,1 \},\]
where
\begin{align*}
a_0=&(z^2-1.96\cos(\frac{\pi}{5})+0.98^2)(z^2-1.7\cos(\frac{\pi}{3})+0.85^2)\\
&(z^2-1.8\cos(\frac{2\pi}{3})+0.9^2),\\
a_1=&(z^2-1.96\cos(\frac{2\pi}{15})+0.98^2)(z^2-1.5\cos(\frac{7\pi}{30})+0.75^2)\\
&(z^2-1.8\cos(\frac{5\pi}{8})+0.9^2).
\end{align*}
Their roots are marked by $\times$'s and $\circ$'s respectively, in Figure \ref{fig:locus}, and shown with respect to the unit circle in the complex plane.
We consider and compare the following three ways of interpolating power spectra between $f_0$ and $f_1$.
\begin{figure}[htb]\begin{center}
\includegraphics[totalheight=4cm]{./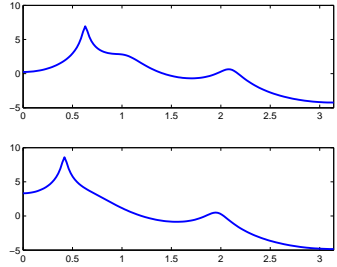}
\caption{Plots of $\log f_0(\theta)$ (upper) and $\log f_1(\theta)$ (lower) for $\theta\in [0, \pi]$.}\label{fig:spectra}\end{center}
\end{figure}
\begin{figure}[htb]\begin{center}
\includegraphics[totalheight=4cm]{./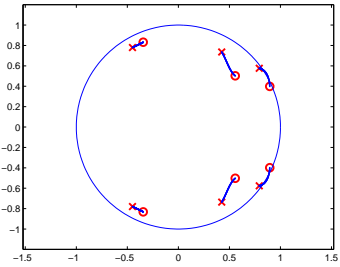}
\caption{Locus of the roots of $a_\tau(z)$ for $\tau\in[0,1]$.}\label{fig:locus}\end{center}
\end{figure}

First, a parametric approach where the AR-coefficient are interpolated:
\begin{subequations}
\begin{align}
&f_{\tau,\AR}(\theta)=\frac{1}{|a_\tau(e^{\jj\theta})|^2}, \label{eq:ar}
\end{align}
with $a_\tau(z)=(1-\tau)a_0(z)+\tau a_1(z)$. Clearly, there is a variety of alternative options (e.g., to interpolate partial reflection coefficients, etc.). However, our choice is intended to
highlight the fact that in a parameter space, admissible models may not always form a convex set. This is evidently the case here as the path includes factors that become ``unstable.''
The locus of the roots of $a_\tau(z)=0$ for $\tau\in[0,1]$ is shown in Figure \ref{fig:locus}.

Then we consider a linear segment connecting the two spectra:
\begin{align}
&f_{\tau,\linear}=(1-\tau)f_0+\tau f_1. \label{eq:linear}
\end{align}
Again, this is to highlight the fact that the space of power spectra is not linear, and in this case, extrapolation beyond the convex linear combination of the two spectra leads to inadmissible function (as the path leads outside of the cone of positive functions).
Finally, we provide the $\g_1$-geodesic between the two
\begin{align}
&f_{\tau,\geodesic}=f_0(\frac{f_1}{f_0})^\tau. \label{eq:geo}
\end{align}
\end{subequations}
We compare $f_{\tau,\AR}$, $f_{\tau,\linear}$ and $f_{\tau,\geodesic}$ for $\tau\in\{\frac{1}{3}, \frac{2}{3}, \frac{4}{3}\}$.
We first note that in plotting $\log f_{\tau,\AR}$ in Figure \ref{fig:ar}, that  $f_{\frac{2}{3},\AR}$ is not shown since it is not admissible.
\begin{figure}[htb]\begin{center}
\includegraphics[totalheight=3cm]{./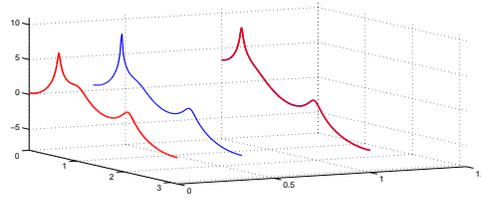}
\caption{$\log f_{\tau,\AR}(\theta)$ for $\tau=\frac{1}{3}, \frac{2}{3}, \frac{4}{3}$ (blue), $\tau=0,1$ (red).}\label{fig:ar}\end{center}
\end{figure}
Likewise $\log f_{\tau,\linear}$ in Figure \ref{fig:linear} breaks up for $\tau=\frac{4}{3}$, since $f_{\frac{4}{3},\linear}$ becomes negative for a range of frequencies --dashed curve indicates the absolute value of the logarithm when this takes complex values.
\begin{figure}[htb]\begin{center}
\includegraphics[totalheight=3cm]{./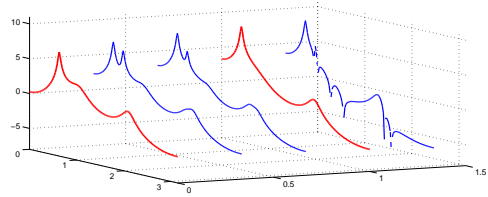}
\caption{$\log f_{\tau,\linear}(\theta)$ for $\tau=\frac{1}{3}, \frac{2}{3}, \frac{4}{3}$ (blue), $\tau=0,1$ (red).}\label{fig:linear}\end{center}
\end{figure}
The plot of $\log f_{\tau,\geodesic}$ is defined for all the $\tau$ and shown in Figure \ref{fig:geo}.
\begin{figure}[htb]\begin{center}
\includegraphics[totalheight=3cm]{./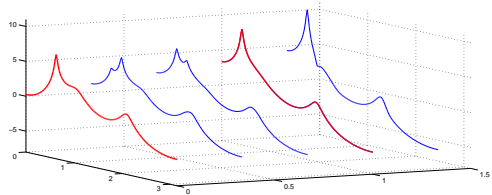}
\caption{$\log f_{\tau,\geodesic}(\theta)$ for $\tau=\frac{1}{3}, \frac{2}{3}, \frac{4}{3}$ (blue), $\tau=0,1$ (red).}\label{fig:geo}\end{center}
\end{figure}
It is worth pointing out how two apparent ``modes'' in $f_{\tau,\linear}$ and $f_{\tau,\geodesic}$ are swapping their dominance, which does not occur when following $f_{\tau,\AR}$.

\subsection*{A multivariable example:}

Consider the two matrix-valued power spectral densities
\begin{align*}
f_0=\left[
      \begin{array}{cc}
        1 & 0 \\
        0.1e^{\jj\theta} & 1 \\
      \end{array}
    \right]\left[
      \begin{array}{cc}
        \frac{1}{|a_0(e^{\jj\theta})|^2} & 0 \\
        0 & 1 \\
      \end{array}
    \right]\left[
      \begin{array}{cc}
        1 & 0.1e^{-\jj\theta}\\
        0 & 1 \\
      \end{array}
    \right]
\end{align*}

\begin{align*}
f_1=\left[
      \begin{array}{cc}
        1 & 0.1e^{\jj\theta} \\
        0 & 1 \\
      \end{array}
    \right]\left[
      \begin{array}{cc}
        1 & 0 \\
        0 & \frac{1}{|a_1(e^{\jj\theta})|^2} \\
      \end{array}
    \right]\left[
      \begin{array}{cc}
        1 & 0 \\
        0.1e^{-\jj\theta} & 1 \\
      \end{array}
    \right].
\end{align*}
Typically, these reflect the dynamic relationship between two time series; in turn these may represent noise input/output of dynamical systems or measurements across independent array of sensors, etc.
The particular example reflects the typical effect of an energy source shifting its signature from one of two sensors to the other as, for instance, a possible scatterer moves with respect to the two sensors.

Below $f_0$ and $f_1$ are shown in Fig.\ \ref{fig:F0} and Fig.\ \ref{fig:F1}, respectively.
Since the value of a power spectral density $f$, at each point in frequency, is a Hermitian matrix, our convention is to show in the (1,1), (1,2) and (2,2) subplots the log-magnitude of the entries $f(1,1), f(1,2)$ (which is the same as $f(2,1)$) and $f(2,2)$, respectively. Then, since only $f(1,2)$ is complex (and the complex conjugate of $f(2,1)$), we plot its phase in the (2,1) subplot.

\begin{figure}[htb]\begin{center}
\includegraphics[totalheight=5cm]{./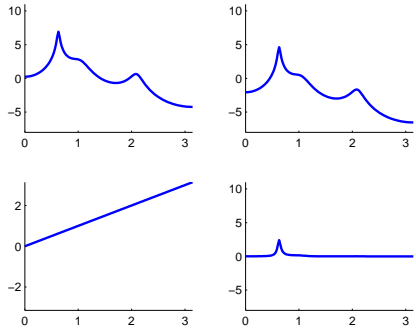}
\caption{Subplots (1,1), (1,2) and (2,2) show $\log f_0(1,1), \log |f_0(1,2)|$ (same as $\log|f_0(2,1)|$) and $\log f_0(2,2)$. Subplot (2,1) shows ${\rm arg}(f_0(2,1))$.}\label{fig:F0}\end{center}
\end{figure}

\begin{figure}[htb]\begin{center}
\includegraphics[totalheight=5cm]{./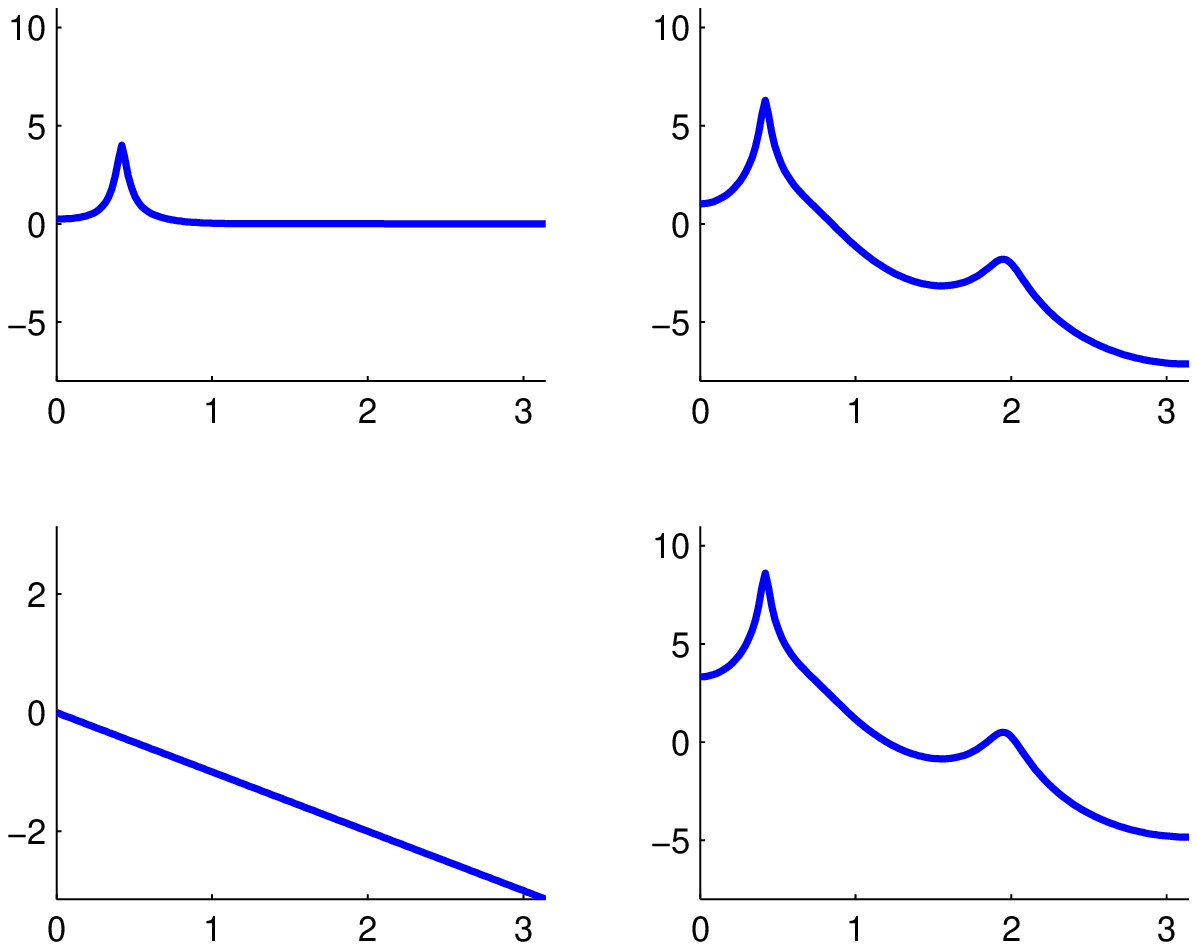}
\caption{Subplots (1,1), (1,2) and (2,2) show $\log f_1(1,1), \log |f_1(1,2)|$ (same as $\log|f_1(2,1)|$) and $\log f_0(2,2)$. Subplot (2,1) shows ${\rm arg}(f_1(2,1))$.}\label{fig:F1}\end{center}
\end{figure}

Three dimensional surface show the geodesic connecting $f_0$ to $f_1$ in Figure \ref{fig:matrix3d}. Here, $f_{\tau,\geodesic}$ is drawn using
\begin{align*}
f_{\tau,\geodesic}=f_0^{\frac{1}{2}}(f_0^{-\frac{1}{2}}f_1f_0^{-\frac{1}{2}})^\tau f_0^{\frac{1}{2}}.
\end{align*}
\begin{figure}[htb]\begin{center}
\hspace*{-10pt}\includegraphics[totalheight=8cm]{./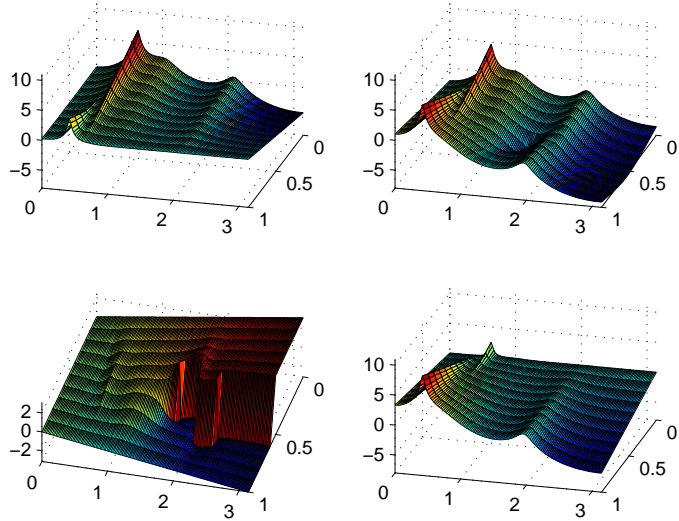}
\caption{Subplots (1,1), (1,2) and (2,2) show $\log f_\tau(1,1), \log |f_\tau(1,2)|$ (same as $\log|f_\tau(2,1)|$) and $\log f_\tau(2,2)$. Subplot (2,1) shows ${\rm arg}(f_\tau(2,1))$, for $\tau\in [0,1]$.}\label{fig:matrix3d}\end{center}
\end{figure}
It is interesting to observe the smooth shift of the energy across frequency and directionality.

\section{Conclusions}\label{sec:conclusion}
The aim of this study has been to develop multivariable divergence measures and metrics for matrix-valued power spectral densities. These are expected to be useful in quantifying uncertainty in the spectral domain, detecting events in non-stationary time series, smoothing and spectral estimation in the context of vector valued stochastic processes. The spirit of the work follows closely classical accounts going back to \cite{Basseville_distance,Gray_distortion} and proceeds along the lines of \cite{Georgiou_distance}. Early work in signal analysis and system identification has apparently focused only on divergence measures between scalar spectral densities, and only recently have such issues on multivariable power spectra attracted attention \cite{Ferrante_time,Ferrante_hellinger}.
Further, this early work on scalar power spectra
was shown to have deep roots in statistical inference, the Fisher-Rao metric, and Kullback-Leibler divergence  \cite{Kass_geometrical},  \cite[page 371]{Gray_distortion}, \cite{Georgiou_distance},  \cite{Yu_kullback}. Thus, it is expected that interesting connections between the geometry of multivariable power spectra and information geometry will be established as well.

\bibliographystyle{IEEEtran}
\bibliography{IEEEabrv,MPSD}
\end{document}